\documentclass[12pt]{amsart}%
\usepackage{amsmath,amsthm,amsfonts,amssymb}
\usepackage{verbatim}
\usepackage{hyperref}
\usepackage{latexsym}
\usepackage{marginnote}
\usepackage{amscd}
\usepackage{amssymb}
\usepackage{xcolor}
\usepackage{amsmath}
\usepackage{amsfonts}
\usepackage{multicol}
\usepackage{mathrsfs}
\usepackage{graphicx}%
\providecommand{\U}[1]{\protect\rule{.1in}{.1in}}
\providecommand{\U}[1]{\protect\rule{.1in}{.1in}}
\providecommand{\U}[1]{\protect\rule{.1in}{.1in}}
\newtheorem{theorem}{Theorem}[section]
\newtheorem{lemma}[theorem]{Lemma}

\newtheorem{definition}[theorem]{Definition}
\newtheorem{corollary}[theorem]{Corollary}

\newtheorem{example}[theorem]{Example}

\title[The Stockwell transform on Gelfand pairs and localization operators]{The Stockwell transform on Gelfand pairs and localization operators}
\author{Claude G. Dosseh$^1$, Mawoussi Todjro$^2$ and Yaogan Mensah$^1$}
\address{$^1$Department of Mathematics, University of Lom\'e, Togo}
\email{\textcolor[rgb]{0.00,0.00,0.84}{claudeganyo@gmail.com}}
\address{$^2$Department of Mathematics, University of Kara, Togo}
\email{\textcolor[rgb]{0.00,0.00,0.84}{todjrom7@gmail.com}}
\address{$^1$Department of Mathematics, University of Lom\'e, Togo}
\email{\textcolor[rgb]{0.00,0.00,0.84}{mensahyaogan2@gmail.com, ymensah@univ-lome.tg}}
\begin{document}
\maketitle

\begin{abstract}
This paper addresses the extension of the Stockwell transform from locally compact abelian groups to Gelfand pairs. A suitable definition of the Stockwell transform is provided in this framework.  Some major properties of this transform are examined. Also,  the localization operators related to the  Stockwell transform are defined and studied.  Mainly,  their boundedness and their belonging to Schatten-von Neumann classes are investigated. 
\end{abstract}

{\small Keywords: Gelfand pair, spherical Fourier transform, Stockwell transform,  localization operator, Schatten-von Neumann class.\\
2020 MSC:43A90, 43A32, 47B10, 47G30.}

\maketitle

\section{Introduction}
It is a challenge to try to extract  relevant information from complex and non-stationary signals. Many natural and electrical phenomena, like earthquakes or power grid shifts, often show rapid changes in their frequency over time. Traditional spectral analysis assumes signal stationarity, making it  incapable of capturing localized dynamic variations. To overcome this limitation, developing robust time-frequency representations  has become a mainstay of modern signal processing.

Signal processing appeals integral transforms, the oldest among them being the Fourier transform. However, the latter is more suitable for stationary signals (these are signals whose statistical characteristics do not vary over time). To handle non-stationary signals, efficient time-frequency analysis tools have been built : Gabor analysis, wavelet transform, shearlet transform, Stockwell transform, etc). 

The Gabor transform \cite{Gabor, Grochenig}, better known as the Short-Time Fourier Transform, consists of  applying a localized sliding window function to the signal before computing  its spectral components.  However, it is constrained by the Gabor uncertainty principle, that is, a narrow window yields high temporal resolution but poor frequency resolution, whereas a wide window produces  the opposite effect.

To resolve the  issue related to the rigidity of the window in the Short-Time Fourier Transform, the wavelet transform  emerged as a powerful framework for multi-resolution analysis \cite{Daubechies, Meyer}. By utilizing dilated and translated versions of a localized mother wavelet, the wavelet transform dynamically adjusts its window configuration. It naturally provides high temporal resolution at high frequencies and high frequency resolution at low frequencies. Despite this adaptability, the wavelet transform exhibits two prominent drawbacks in practical applications: shift sensitivity and poor directionality. A solution to these issues are provided by  the shearlet transform \cite{Kutyniok}, an improved version of the wavelet transform designed for  multi-dimensional data like images and  videos. 

To bridge the structural gap between the Short-Time Fourier Transform and the wavelet transform, Stockwell et al. \cite{Stockwell} proposed the Stockwell transform, also known as the S-transform. The Stockwell transform can be conceptualized as an extension of the Short-Time Fourier Transform with a frequency-dependent Gaussian window, or as a phase-corrected wavelet transform using a specific mother wavelet multiplied by a complex exponential.
 It has been proved to be a powerful analysis tool for non-stationary signals. Mainly,  it provides a dynamic view of the frequency components.  This feature makes it particularly valuable in applied sciences such as seismology, electro-encephalography, medical imaging, mechanical vibration analysis and image processing, where phase information is highly sensitive. 
Many researchers have studied the mathematical aspects of the S-transform and its applications in various fields of sciences. Readers interested in the Stockwell transforms (standard and modified forms) and their associated localization operators may consult for instance \cite{Hamadi, Hutnikova, Liu, Livanos,  Singh, Ventosa, Wei, WongZhu, Ying}. 

Many of the transformations cited above have been studied from an abstract point of view,  specially on groups and homogeneous spaces. For example one may consult (including references therein) \cite{Coulibaly, Farashahi} for the Gabor transform, \cite{Fuhr} for the wavelet transform, \cite{KamyabiGolAtayi} for the shearlet transform. 
 
  F. Esmaeelzadeh's recent work on the Stockwell transform falls within the perspective of these generalizations \cite{Esmaeelzadeh, Esmaeelzadeh1}. Indeed,  F. Esmaeelzadeh studied the Stockwell transform on  locally compact  abelian  groups and the  localization operators associated with them as  the first generalization of the Stockwell analysis beyond  Euclidian spaces. From  Abstract harmonic analysis point of view, one can explore a more general framework by investigating the Stockwell transform on  Gelfand pairs since Gelfand pairs generalize locally compact abelian groups.
The introduction of Gelfand pairs occurred from the necessity  to generalize harmonic analysis   beyond locally compact abelian groups. It resulted  a suitable theory applicable to some locally compact groups with some additionnal commutative property on their  group algebra \cite{Gelfand}. The theory assures that one can construct a transformation of Fourier type, known as the spherical Fourier transform,  from a locally compact group and one of its compact subgroups. For more details,  refer to \cite{Dijk, Wolf}.  

The main objective of the present article is the study of the Stockwell transform on Gelfand pairs and the associated localization operators. Methods were borrowed from \cite{Esmaeelzadeh, Esmaeelzadeh1, Mensah} but the results obtained encompass the abelian case. 
Extending  results from abelian groups to Gelfand pairs is advantageous because the latter constitute the simplest noncommutative framework where many tools of harmonic analysis on locally compact  abelian groups remain valid. Recall that for a locally compact abelian group $G$, the convolution algebra $L^1(G)$ is commutative,  Fourier analysis diagonalizes the convolution operators, the characters act as elementary waves, and spectral theory is relatively simple.
When the group is no longer commutative,  these properties generally disappear. However, if $(G,K)$ is a Gelfand pair, the algebra of $K$-bi-invariant functions $L^1(G//K)$  is still commutative. Thus, many abelian techniques can be generalized. 
Finally, let us point out that many  important spaces arise from Gelfand pairs. Examples include spheres, hyperbolic spaces, symmetric spaces, and homogeneous trees. Thus, results initially proved for abelian groups can be transferred to geometric settings relevant to differential geometry and mathematical physics.

The remainder of the paper is structured as follows. In Section \ref{Preliminaries}, we collect  mathematical tools necessary for the understanding of the article. In Section \ref{Stockwell transform on Gelfand pairs},  various properties of the Stockwell transform on Gelfand pairs are investigated.  Section \ref{Localization operators related to the Stockwell transform on Gelfand pairs} is devoted to the study of the localization operators associated with the Stockwell transform on Gelfand pairs. 
\section{Preliminaries}\label{Preliminaries}
\subsection{Harmonic analysis on Gelfand pairs}
This subsection collects some necessary ingredients which may be useful in the rest of the  paper.  
We refer to \cite{Dijk, Wolf} for more details.

Let $G$ be a locally compact  group (assumed to be Hausdorff once for all) with a left Haar measure and let  $K$ be a compact subgroup of  $G$. A complex-valued function $f$ on $G$ is said to be  $K$-bi-invariant if  $\forall x\in G, \forall k, k' \in K$,
\begin{equation}
  	f(kxk')=f(x).
\end{equation}
 Let $\mathcal{C}_c(G//K)$   denote  the space of complex-valued continuous compactly supported $K$-bi-invariant functions on $G$  and  let $L^p(G//K),\,1\leq p\leq \infty$, denote the space of elements of the Lebesgue space $L^p(G)$  which are $K$-bi-invariant. The space $L^p(G//K)$ inherits its norm from $L^p(G)$.
 
 The convolution product is defined by 
\begin{equation}
	(f\ast g)(x)=\int_G f(y)g(y^{-1}x)dy, \ f, g\in L^1(G//K).
\end{equation}  
 
The pair $(G, K)$ is called a Gelfand pair if the convolution algebra $\mathcal{C}_c(G//K)$ is  commutative. Since $\mathcal{C}_c(G//K)$ is dense in $L^1(G//K)$, then it is equivalent    to say that $(G, K)$ is a Gelfand pair if  $L^1(G//K)$ is commutative under the convolution product  \cite[page 154]{Wolf}. Let us  point out  that if $(G,K)$ is a Gelfand pair, then the group $G$ is  unimodular  \cite[page 154]{Wolf}.

Let us provide examples of Gelfand pairs.
\begin{enumerate}
\item[1.] $(G, \{e\})$ where $G$ is a locally compact abelian group with neutral element $e$,
\item[2. ] $(E(n), SO(n))$ where $E(n)=SO(n)\ltimes \mathbb{R}^n$ and $SO(n)$ are respectively the group of Euclidean motions and the special orthogonal group on  $\mathbb{R}^n$, 
\item[3. ] $(GL(n,\mathbb C), U(n))$ where $GL(n,\mathbb C)$ is the general linear group and $U(n)$ is the unitary group on $\mathbb C^n$.   
\end{enumerate}

A  $K$-bi-invariant function $\varphi$ is called  a  spherical function for the Gelfand pair $(G,K)$ if the map $\chi_\varphi: \mathcal{C}_c(G//K)\to \mathbb{C}, f\mapsto \displaystyle\int_G f(x)\varphi(x)dx$ is a nontrivial character of the convolution algebra $\mathcal{C}_c(G//K)$. That is, for all $f,g\in \mathcal{C}_c(G//K)$,
$$\chi_\varphi (f\ast g)=\chi_\varphi(f)\chi_\varphi(g).$$

Let us recall the following important result about spherical functions. 
\begin{theorem} \cite[Proposition 6.1.5]{Dijk}
A nonzero $K$-bi-invariant  continuous function $\varphi$ on $G$ is a spherical function if and only if 
$$\int_K\varphi (xky)dk=\varphi(x)\varphi(y).$$
In particular, $\varphi(e)=1$. 
\end{theorem}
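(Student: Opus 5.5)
The plan is to prove the two implications separately. The main tool is the averaging projection $f\mapsto f^K$, where
$$f^K(x)=\int_K\int_K f(kxk')\,dk\,dk',$$
which maps $\mathcal{C}_c(G)$ onto $\mathcal{C}_c(G//K)$. The Haar measure $dk$ on $K$ is normalized, and $G$ is unimodular, as recalled above for Gelfand pairs.

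\emph{Functional equation implies spherical.} Assume $\varphi\neq 0$ is continuous, $K$-bi-invariant, and satisfies $\int_K\varphi(xky)\,dk=\varphi(x)\varphi(y)$. Take $f,g\in\mathcal{C}_c(G//K)$ and write
$$\chi_\varphi(f\ast g)=\int_G\int_G f(y)g(y^{-1}x)\varphi(x)\,dy\,dx.$$
Substitute $x=yz$ to get $\int_G\int_G f(y)g(z)\varphi(yz)\,dy\,dz$. Since $f$ is right $K$-invariant, we may replace $y$ by $yk$ and average over $K$. This gives
$$\int_G\int_G f(y)g(z)\Big(\int_K\varphi(ykz)\,dk\Big)\,dy\,dz=\chi_\varphi(f)\chi_\varphi(g).$$
It remains to show $\chi_\varphi$ is nontrivial. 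Because $\varphi$ is continuous and nonzero, there is $g\in\mathcal{C}_c(G)$ with $\int_G g\varphi\neq 0$. By bi-invariance of $\varphi$ and Fubini, $\int_G g^K\varphi=\int_G g\varphi$, so $\chi_\varphi(g^K)\neq0$.

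\emph{Spherical implies the functional equation.} Assume $\chi_\varphi$ is a nontrivial character. Fix $x$ and consider $\psi(y)=\int_K\varphi(xky)\,dk-\varphi(x)\varphi(y)$. The plan is to show $\int_G g(y)\psi(y)\,dy=0$ for every $g\in\mathcal{C}_c(G//K)$. Since $\psi$ is $K$-bi-invariant in $y$ and continuous, the averaging argument then extends this to all $g\in\mathcal{C}_c(G)$, forcing $\psi\equiv0$.

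Next choose $f\in\mathcal{C}_c(G//K)$ with $\chi_\varphi(f)=1$. Rerunning the computation from the first part in the reverse direction gives, for all $f,g$,
$$\int_G\int_G f(y)g(z)\Big(\int_K\varphi(ykz)\,dk-\varphi(y)\varphi(z)\Big)\,dy\,dz=0.$$
The inner bracket $\Phi(y,z)$ is $K$-bi-invariant in each variable separately and continuous. Taking $f,g$ to be arbitrary elements of $\mathcal{C}_c(G//K)$ gives $\iint F(y)H(z)\Phi(y,z)\,dy\,dz=0$ for all $F,H\in\mathcal{C}_c(G)$, after averaging. This means $\Phi$ annihilates all products $F\otimes H$, whose span is dense in $\mathcal{C}_c(G\times G)$ for the inductive limit topology. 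Hence $\Phi\equiv0$, which is the identity we want.

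\emph{The special case $\varphi(e)=1$.} Put $y=e$ in the identity to get $\varphi(x)=\varphi(x)\varphi(e)$, using right $K$-invariance: $\int_K\varphi(xk)\,dk=\varphi(x)$. Choosing $x$ with $\varphi(x)\neq0$ gives $\varphi(e)=1$.

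The main obstacle is a gap in the definition for the second direction. The spherical direction implicitly requires $\varphi$ to be continuous (the statement assumes this) and locally integrable so that $\chi_\varphi$ makes sense. The density and separating step is where care is needed. I would handle it by approximate identities in $\mathcal{C}_c(G//K)$ concentrating at $K$, evaluating $\Phi$ pointwise by letting $f,g$ tend to bi-$K$-averaged Dirac masses at $y$ and $z$.
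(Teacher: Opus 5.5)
The paper gives no proof of this statement; it is quoted from van Dijk's book. Your argument is correct and is essentially the standard textbook proof found there. Multiplicativity of $\chi_\varphi$ on $\mathcal{C}_c(G//K)$ is equivalent to $\iint f(y)g(z)\Phi(y,z)\,dy\,dz=0$. Bi-$K$-invariance of $\Phi$ in each variable lifts this identity to all of $\mathcal{C}_c(G)$, continuity then forces $\Phi\equiv 0$, and the converse and the value $\varphi(e)=1$ follow as you describe.

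A few simplifications are available. The auxiliary function $\psi$ and the normalization $\chi_\varphi(f)=1$ are never used and can be deleted: multiplicativity plus continuity already give $\Phi\equiv0$, and nontriviality is only needed in the converse direction. The density of $\operatorname{span}\{F\otimes H\}$ in $\mathcal{C}_c(G\times G)$ can be bypassed: integrate against an arbitrary $H\in\mathcal{C}_c(G)$ in $z$, use continuity in $z$, then repeat in $y$. Finally, unimodularity of $G$ is not needed; it suffices that the modular function is trivial on the compact subgroup $K$.
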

If $G=\mathbb{R}$  and $K=\{0\}$, then spherical functions are the exponentials functions $x\mapsto e^{\lambda x},\, \lambda \in \mathbb{C}$. 

Let $\mathcal{S}_b(G, K)$ denote the set of bounded spherical functions for the Gelfand pair $(G, K)$. 
The spherical Fourier transform of  $f \in L^1(G//K)$ is defined by
 \begin{equation} 
\widehat{f}(\varphi)=\int_G f(x)\varphi(x^{-1})dx,\,\varphi\in S_b(G,K).
\end{equation} 

The set $\mathcal{S}_b(G, K)$ is endowed with the weak topology from the family of maps  $\{\widehat{f}:  f\in L^1(G//K)\}$ which turns it into a locally compact Hausdorff space \cite[page 185]{Wolf}.

A  function $\varphi: G\to \mathbb{C}$ is said to be positive-definite if $\forall N\in\mathbb{N}, \forall x_1, \cdots, x_N\in G, \forall z_1, \cdots, z_N\in\mathbb{C}$, we have
\begin{eqnarray*}
\sum_{n=1}^{N}\sum_{m=1}^{N}\varphi(x^{-1}_n x_m)\bar{z_n}z_m\geq 0.
\end{eqnarray*} 
Positive-definite functions have the following properties. 
\begin{theorem}\cite[Proposition 8.4.2]{Wolf}
Let $\varphi$ be a positive-definite function. Then,
	\item[1)] $\forall x\in G, |\varphi(x)|\leq\varphi(e)$, where $e$ is the neutral element of $G$.
	\item[2)] $\forall x\in G, \varphi(x^{-1})=\overline{\varphi(x)}$.
\end{theorem}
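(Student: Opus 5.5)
The plan is to work directly from the definition of positive-definiteness with small values of $N$, and to use the fact that a positive semidefinite Hermitian matrix has nonnegative diagonal and nonnegative principal minors. The definition says that for any points $x_1,\dots,x_N$ in $G$ the matrix $A=(\varphi(x_n^{-1}x_m))_{n,m}$ satisfies $\bar z^{T}Az\geq 0$ for all $z\in\mathbb{C}^N$.

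First take $N=1$ and $x_1=e$. This gives $\varphi(e)|z_1|^2\geq 0$, hence $\varphi(e)\geq 0$ and in particular $\varphi(e)$ is real.

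Next take $N=2$ with $x_1=e$ and $x_2=x$. The quadratic form becomes
\begin{equation*}
Q(z_1,z_2)=\varphi(e)\left(|z_1|^2+|z_2|^2\right)+\varphi(x)\bar z_1 z_2+\varphi(x^{-1})\bar z_2 z_1\geq 0 .
\end{equation*}

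For item 2), I first use $z_1=z_2=1$. Since $\varphi(e)$ is real, this shows that $\varphi(x)+\varphi(x^{-1})$ is real. Then I use $z_1=1$, $z_2=i$: here $\bar z_1 z_2=i$ and $\bar z_2 z_1=-i$, so $i\varphi(x)-i\varphi(x^{-1})$ is real, i.e. $\varphi(x)-\varphi(x^{-1})$ is purely imaginary. Write $a=\varphi(x)$ and $b=\varphi(x^{-1})$. The two conditions say $\operatorname{Im}(a+b)=0$ and $\operatorname{Re}(a-b)=0$, which give $\operatorname{Re}a=\operatorname{Re}b$ and $\operatorname{Im}a=-\operatorname{Im}b$. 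Hence $b=\bar a$, which is exactly $\varphi(x^{-1})=\overline{\varphi(x)}$.

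For item 1), the step needing most care is choosing $z$ so that the cross terms become $-2|\varphi(x)|$. If $\varphi(x)=0$ there is nothing to prove. Otherwise, using item 2), set $z_1=1$ and $z_2=-\overline{\varphi(x)}/|\varphi(x)|$. The cross terms then sum to $-2|\varphi(x)|$, and the inequality reads $2\varphi(e)-2|\varphi(x)|\geq 0$, which is the claim $|\varphi(x)|\leq\varphi(e)$.

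The same conclusion can be read off from the $2\times 2$ matrix. It is Hermitian and positive semidefinite, so its determinant $\varphi(e)^2-|\varphi(x)|^2$ is nonnegative; since $\varphi(e)\geq 0$, this again gives $|\varphi(x)|\leq\varphi(e)$.
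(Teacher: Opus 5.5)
Your proof is correct: the $N=1$ case gives $\varphi(e)\geq 0$, and the $N=2$ case with $x_1=e$, $x_2=x$ and the test vectors $(1,1)$, $(1,i)$ and $\bigl(1,-\overline{\varphi(x)}/|\varphi(x)|\bigr)$ yields first $\varphi(x^{-1})=\overline{\varphi(x)}$ and then $|\varphi(x)|\leq\varphi(e)$. The paper gives no proof of its own and only cites Proposition 8.4.2 of Wolf; your argument, which amounts to the Hermitian positive semidefiniteness of the $2\times 2$ matrix $\bigl(\varphi(x_n^{-1}x_m)\bigr)_{n,m}$, is the standard proof of this fact.
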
   
Let $\mathcal{S}^+$ and $B(G//K)$ denote respectively the set of positive-definite spherical functions for the Gelfand pair $(G,K)$ and   the set of  linear combinations of positive-definite and $K$-bi-invariant functions on $G$. Then,  there exists a positive Radon measure $\mu$ on $\mathcal{S}^+$ such that if $f\in B (G//K)\cap L^{1}(G//K)$,  then $\widehat{f}\in L^1(\mathcal{S}^+)$ and the following spherical  Fourier  inversion formula holds \cite[page 191]{Wolf}:
\begin{equation}
  f(x)=\displaystyle\int_{\mathcal{S}^+}\widehat{f}(\varphi)\varphi (x)d\mu (\varphi),\, x\in G.
  \end{equation}
In the sequel, we will write $d\varphi$ for $d\mu(\varphi)$. 
 
 There exists a Plancherel type theorem for the spherical Fourier transform which reads as follows. 
\begin{theorem}\cite[Theorem 9.5.1]{Wolf}
Let $(G, K)$ be a Gelfand pair. If $f\in L^1(G//K)\cap L^2(G//K)$, then $\widehat{f}\in L^2(\mathcal{S}^+)$ and 
$$\|\widehat{f}\|_{L^2(\mathcal{S}^+)}=\|f\|_{L^2(G//K)}.$$
 Furthermore, the spherical Fourier transform $\mathcal{F}: L^1(G//K)\cap L^2(G//K)\to L^2(\mathcal{S}^+)$ extends  to an isometry from $L^2(G//K)$ onto $L^2(\mathcal{S}^+)$.
\end{theorem}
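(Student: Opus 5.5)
The plan follows the classical strategy for Plancherel theorems in the Gelfand pair setting: first prove the isometry on the dense subspace $L^1(G//K)\cap L^2(G//K)$ with the inversion formula, then extend by density and establish surjectivity separately.

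\emph{Step 1: reduction to Parseval.} For $f\in L^1(G//K)\cap L^2(G//K)$, set $f^*(x)=\overline{f(x^{-1})}$ and $g=f^*\ast f$. Then $g$ is $K$-bi-invariant and lies in $L^1(G//K)$. It is also continuous, since it is a convolution of two $L^2$ functions. The standard computation
\[
\sum_{n,m}g(x_n^{-1}x_m)\bar z_n z_m=\Big\|\sum_m z_m\,\lambda(x_m)^{-1}f\Big\|_2^2\ge 0
\]
shows that $g$ is positive-definite, so $g\in B(G//K)\cap L^1(G//K)$. The inversion formula therefore applies, and evaluating at $x=e$ gives $g(e)=\int_{\mathcal S^+}\widehat g(\varphi)\,d\varphi$, using $\varphi(e)=1$. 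On the group side, unimodularity gives $g(e)=\int_G\overline{f(y)}f(y)\,dy=\|f\|_2^2$.

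\emph{Step 2: computing $\widehat g$.} Since $\chi_\varphi$ is a character, we have $\widehat{f^*\ast f}(\varphi)=\widehat{f^*}(\varphi)\widehat f(\varphi)$. This identity has to be justified on $L^1(G//K)$ rather than only on $\mathcal C_c(G//K)$, which follows by density and continuity because $\varphi$ is bounded. For $\varphi\in\mathcal S^+$ we have $\varphi(x^{-1})=\overline{\varphi(x)}$ by the theorem on positive-definite functions, and hence
\[
\widehat{f^*}(\varphi)=\int_G \overline{f(x^{-1})}\,\overline{\varphi(x)}\,dx=\overline{\widehat f(\varphi)}.
\]
This step again uses unimodularity, through the change of variables $x\mapsto x^{-1}$. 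Thus $\widehat g=|\widehat f|^2\ge0$, and Step 1 yields $\|\widehat f\|_{L^2(\mathcal S^+)}^2=\|f\|_2^2$.

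\emph{Step 3: extension and surjectivity.} Because $L^1(G//K)\cap L^2(G//K)$ is dense in $L^2(G//K)$ (it contains $\mathcal C_c(G//K)$), the map $\mathcal F$ extends uniquely to an isometry on $L^2(G//K)$. Its range is then closed, so it remains to show that the range is dense in $L^2(\mathcal S^+)$. This is the main obstacle. My approach would be to suppose $h\in L^2(\mathcal S^+)$ is orthogonal to every $\widehat f$ and derive a contradiction with $h\neq 0$.

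To do this I would use the translates $f_x(y)=\int_K f(xky)\,dk$. By the functional equation for spherical functions, these satisfy $\widehat{f_x}(\varphi)=\varphi(x)\widehat f(\varphi)$, so $h\bar{\widehat f}$ is orthogonal to all functions $\varphi\mapsto\varphi(x)$. This says the measure $h\bar{\widehat f}\,d\mu$, which is finite, has vanishing inverse Fourier–Stieltjes transform. Uniqueness of the Fourier–Stieltjes transform on $\mathcal S^+$, or alternatively Stone–Weierstrass density of the algebra $\{\widehat f\}$ in $C_0(\mathcal S^+)$, would then give $h\bar{\widehat f}=0$ for all $f$. Finally, since for each $\varphi$ some $\widehat f(\varphi)\neq0$, we conclude $h=0$ $\mu$-a.e.

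If this uniqueness argument proves delicate, the fallback is to invoke the Plancherel measure construction in \cite{Wolf}, where $\mu$ is characterized precisely so that the range is dense.
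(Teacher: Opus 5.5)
The paper gives no proof of this statement. It is quoted from Wolf's Theorem~9.5.1 in the preliminaries and used only as a black box, through Corollary~\ref{Parseval}, so there is no in-paper argument to compare yours with. Your proposal is the standard proof, namely the Gelfand-pair version of the classical Plancherel argument for abelian groups:
\begin{itemize}
\item the isometry on $L^1(G//K)\cap L^2(G//K)$, obtained by applying the quoted inversion formula to $f^*\ast f$ at $x=e$;
\item extension by density;
\item surjectivity, by showing the orthogonal complement of the range is trivial.
\end{itemize}
Compared with the bare citation, it has the merit of deriving Plancherel from facts the paper already records: the inversion formula, $\varphi(x^{-1})=\overline{\varphi(x)}$ on $\mathcal S^+$, the functional equation, and unimodularity.

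There are two small slips in the first half, neither of which affects the conclusion.
\begin{itemize}
\item Since $(f^*\ast f)(x)=\int_G\overline{f(y)}f(yx)\,dy=\langle\rho(x)f,f\rangle$ with $\rho(x)f(y)=f(yx)$, the positive-definiteness identity should read $\sum_{n,m}g(x_n^{-1}x_m)\bar z_n z_m=\|\sum_m z_m\rho(x_m)f\|_2^2$. That is, it uses right translates, not left ones.
\item With the paper's convention, $\widehat f(\varphi)=\int_G f(x)\varphi(x^{-1})\,dx=\chi_{\bar\varphi}(f)$ for $\varphi\in\mathcal S^+$. So multiplicativity comes from the character attached to $\bar\varphi$, which is again spherical.
\end{itemize}
Your identity $\widehat{f_x}(\varphi)=\varphi(x)\widehat f(\varphi)$ is correct. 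Note that it needs the functional equation together with the left $K$-invariance of $f$.

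The step that does not follow as written is the last one. For each $f$ you get $\widehat f\,\bar h=0$ only off a $\mu$-null set that depends on $f$. There are uncountably many $f$, so knowing that every $\varphi$ admits some $f$ with $\widehat f(\varphi)\neq0$ does not yet give $h=0$ a.e. Argue locally instead:
\begin{itemize}
\item On a compact $C\subset\mathcal S^+$, continuity of the functions $\widehat f$ and compactness give finitely many $f_1,\dots,f_r\in\mathcal C_c(G//K)$ with $\sum_i|\widehat{f_i}|>0$ on $C$. Hence $h=0$ $\mu$-a.e.\ on $C$.
\item Each set $\{|h|>1/n\}$ has finite $\mu$-measure because $h\in L^2(\mathcal S^+)$. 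Since $\mu$ is Radon, inner regularity on these sets then gives $h=0$ $\mu$-a.e.
\end{itemize}

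If you use Stone--Weierstrass for the uniqueness step, make the bridge explicit. Put $\nu=\widehat f\,\bar h\,d\mu$, a finite measure. For any $g\in L^1(G//K)$, Fubini and unimodularity give $0=\int_G g(x^{-1})\int_{\mathcal S^+}\varphi(x)\,d\nu(\varphi)\,dx=\int_{\mathcal S^+}\widehat g\,d\nu$. You then need $\{\widehat g|_{\mathcal S^+}\}$ to be a subalgebra of $C_0(\mathcal S^+)$ that is closed under conjugation, separates points and vanishes nowhere.
\begin{itemize}
\item Closure under conjugation is the identity $\overline{\widehat g}=\widehat{g^*}$ on $\mathcal S^+$, which again uses positive-definiteness.
\item Membership in $C_0(\mathcal S^+)$ uses that $\mathcal S^+$ is closed in $\mathcal S_b(G,K)$. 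This holds because weak convergence of spherical functions implies pointwise convergence, via $\varphi(x)=\widehat{f_x}(\varphi)/\widehat f(\varphi)$.
\end{itemize}
With these repairs the argument is complete.
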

\begin{corollary}\label{Parseval}\cite[Corollary 9.5.2]{Wolf}\\
	If $f, g\in L^2(G//K)$, then $\widehat{f}, \widehat{g}\in L^2(\mathcal{S}^+)$ and
	$$\langle \widehat{f}, \widehat{g}\rangle_{L^2(\mathcal{S}^+)}=\langle f, g\rangle_{L^2(G//K)}.$$
\end{corollary}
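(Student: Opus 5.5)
The plan is to derive the Parseval identity of Corollary \ref{Parseval} from the isometry statement of the preceding Plancherel theorem by polarization. I will take as given that theorem, which says that $\mathcal{F}$ extends to a linear isometry from $L^2(G//K)$ onto $L^2(\mathcal{S}^+)$. For $f,g\in L^2(G//K)$ the symbols $\widehat{f},\widehat{g}$ denote the values of this extension. So $\widehat{f},\widehat{g}\in L^2(\mathcal{S}^+)$ holds immediately, and the only thing to prove is the equality of inner products.

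The key step is the complex polarization identity, valid in any complex inner product space (linear in the first slot):
\begin{equation*}
\langle u,v\rangle=\frac{1}{4}\sum_{k=0}^{3} i^{k}\,\|u+i^{k}v\|^{2}.
\end{equation*}
\begin{enumerate}
\item Apply this identity with $u=\widehat{f}$ and $v=\widehat{g}$ in $L^2(\mathcal{S}^+)$.
\item Use the linearity of the extended transform to write $\widehat{f}+i^k\widehat{g}=\widehat{(f+i^k g)}$.
\item Since $f+i^kg\in L^2(G//K)$, the isometry property gives $\|\widehat{(f+i^k g)}\|_{L^2(\mathcal{S}^+)}=\|f+i^kg\|_{L^2(G//K)}$.
\item Substitute these equalities into the sum and apply polarization again, now in $L^2(G//K)$, to recover $\langle f,g\rangle_{L^2(G//K)}$.
\end{enumerate}

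The only point needing care is that the extension is linear. This is standard: $\mathcal{F}$ is linear on the dense subspace $L^1(G//K)\cap L^2(G//K)$. The extension is defined by limits of $L^2$-Cauchy sequences, so linearity passes to it by continuity.

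One small check is that $L^1\cap L^2$ of $K$-bi-invariant functions is dense in $L^2(G//K)$. This follows by averaging compactly supported approximants over $K\times K$, and it is in any case implicit in the theorem's assertion that the extension exists. I expect no genuine obstacle beyond this.
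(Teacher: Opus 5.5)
Your proof is correct: applying polarization to the linear isometric extension of $\mathcal{F}$ from the preceding Plancherel theorem yields the inner-product identity immediately, and this is the standard way such a corollary follows from a Plancherel theorem. The paper gives no proof of its own; it simply quotes the result as Corollary 9.5.2 in Wolf's book, where it appears as a direct consequence of that Plancherel theorem.
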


To finish this subsection, we recall the following interpolation theorems. 
\begin{theorem}\label{p3}\cite[Theorem 21.2]{Tartar}
	Let $1\leq p_0, p_1, q_0, q_1\leq\infty$. If $T: L^{p_0}(\nu)\to L^{q_0}(\mu)$ is a bounded linear operator with norm $M_0$ and $T:L^{p_1}(\nu)\to L^{q_1}(\mu)$ is a bounded linear operator with norm $M_1$, then for $0 <\theta<1, T: L^{p}(\nu)\to L^{q}(\mu)$ is a bounded linear operator with norm $M\leq M_0^{1-\theta}M_1^{\theta}$, where $\dfrac{1}{p}=\dfrac{1-\theta}{p_0}+\dfrac{\theta}{p_1}$ and $\dfrac{1}{q}=\dfrac{1-\theta}{q_0}+\dfrac{\theta}{q_1}$. 
\end{theorem}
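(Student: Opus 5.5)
The plan is the classical complex-interpolation argument of Riesz and Thorin, built on Hadamard's three-lines lemma. Fix $\theta\in(0,1)$ and $p,q$ as in the statement. Since the spaces are complex, duality gives
$$\|Tf\|_{L^q(\mu)}=\sup\Big\{\Big|\int (Tf)\,g\,d\mu\Big| : g \text{ simple},\ \|g\|_{L^{q'}(\mu)}\le 1\Big\},$$
with $q'$ the conjugate exponent. (When $q=1$ and $\mu$ is not $\sigma$-finite, I would restrict to $\sigma$-finite pieces first.) It therefore suffices to prove
$$\Big|\int (Tf)\,g\,d\mu\Big|\le M_0^{1-\theta}M_1^{\theta}\,\|f\|_{L^p(\nu)}\,\|g\|_{L^{q'}(\mu)}$$
for simple $f$ with finite-measure support and simple $g$. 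If $p<\infty$ these $f$ are dense in $L^p(\nu)$, and the bound extends to all of $L^p(\nu)$ by continuity. If $p=\infty$, then $p_0=p_1=\infty$, and I would treat this degenerate case separately: apply the same argument directly to bounded $f$, or note that $T$ is given on $L^\infty$ by hypothesis.

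By homogeneity I normalize $\|f\|_p=\|g\|_{q'}=1$ and write $f=\sum_j a_j e^{i\alpha_j}\chi_{E_j}$ and $g=\sum_k b_k e^{i\beta_k}\chi_{F_k}$ with $a_j,b_k>0$ and disjoint sets. For $z$ in the closed strip $0\le \operatorname{Re} z\le 1$, define
$$\frac{1}{p(z)}=\frac{1-z}{p_0}+\frac{z}{p_1},\qquad \frac{1}{q'(z)}=\frac{1-z}{q_0'}+\frac{z}{q_1'},$$
$$f_z=\sum_j a_j^{p/p(z)}e^{i\alpha_j}\chi_{E_j},\qquad g_z=\sum_k b_k^{q'/q'(z)}e^{i\beta_k}\chi_{F_k}.$$
If $q'=\infty$, I would use the convention that $g_z=g$. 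Then set
$$F(z)=\int (Tf_z)\,g_z\,d\mu=\sum_{j,k}a_j^{p/p(z)}\,b_k^{q'/q'(z)}\,e^{i(\alpha_j+\beta_k)}\int (T\chi_{E_j})\,\chi_{F_k}\,d\mu .$$
By linearity of $T$ this is a finite sum of exponentials in $z$. Hence $F$ is entire and bounded on the strip, and $F(\theta)=\int (Tf)\,g\,d\mu$ because $p(\theta)=p$ and $q'(\theta)=q'$.

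On the line $\operatorname{Re}z=0$ one has $|f_{it}|=|f|^{p/p_0}$ and $|g_{it}|=|g|^{q'/q_0'}$. Hence $\|f_{it}\|_{p_0}=\|f\|_p^{p/p_0}=1$ and $\|g_{it}\|_{q_0'}=1$, so Hölder's inequality gives $|F(it)|\le M_0$. In the same way $|F(1+it)|\le M_1$. The three-lines lemma (Phragmén–Lindelöf for bounded holomorphic functions on a strip, applied to $F(z)M_0^{z-1}M_1^{-z}$) then yields $|F(\theta)|\le M_0^{1-\theta}M_1^{\theta}$, which is the claimed bound.

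The main obstacle is bookkeeping in the degenerate exponents rather than any conceptual step. When some $p_i$ or $q_i'$ is infinite, the exponents $p/p(z)$ or $q'/q'(z)$ become constant. One must check that the boundary-norm identities still hold and that $T$ is well defined on the $f_z$; this is guaranteed because the $f_z$ are simple functions with finite-measure support and hence lie in $L^{p_0}\cap L^{p_1}$. I would handle these cases by inspection, after the generic case $1<p_i,q_i<\infty$.
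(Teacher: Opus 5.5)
The paper gives no proof of this statement. It is the Riesz--Thorin theorem, quoted from Tartar, and the standard proof there is Thorin's argument, which is the one you follow: simple functions, the analytic families $f_z$, $g_z$, H\"older's inequality on the two boundary lines, and Hadamard's three-lines lemma. Your outline is correct and essentially that proof, but a few steps are stated too quickly.

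\textbf{Extension from simple functions.} The passage from simple $f$ to all of $L^p(\nu)$ is not ``by continuity'' alone. A bound on a dense subspace produces \emph{some} bounded extension, and you must check that it agrees with the given $T$ on $L^p(\nu)\subset L^{p_0}(\nu)+L^{p_1}(\nu)$. The usual fix, say for $p_0<p_1$, runs as follows.
\begin{enumerate}
\item Choose simple $f_n$ with finite-measure support, $|f_n|\le |f|$ and $f_n\to f$ pointwise, uniformly on $\{|f|\le 1\}$ if $p_1=\infty$.
\item Split along $\{|f|>1\}$ and $\{|f|\le 1\}$. Dominated convergence (or uniform convergence when $p_1=\infty$) gives $f_n\to f$ in $L^{p_0}+L^{p_1}$.
\item Hence $Tf_n\to Tf$ in $L^{q_0}+L^{q_1}$, and so a.e.\ along a subsequence.
\item Fatou's lemma then yields $\|Tf\|_q\le M_0^{1-\theta}M_1^{\theta}\|f\|_p$.
\end{enumerate}

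\textbf{Measure-theoretic caveat.} Your caveat is attached to the wrong case. For $q=1$ nothing is needed, since $Tf\in L^1(\mu)$ has $\sigma$-finite support. The delicate case is $q=q_0=q_1=\infty$. There, duality against simple $g$ with finite-measure support recovers $\|Tf\|_\infty$ only if $\mu$ is semifinite. Otherwise, apply the three-lines lemma pointwise in $y$ to the finite exponential sum $z\mapsto T(f_z)(y)$. Control the boundary values for rational $t$ outside a single null set, then extend by continuity in $t$.

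\textbf{Remaining small points.} For $p=p_0=p_1=\infty$, the remark that $T$ is given on $L^\infty$ is not by itself an argument. However, H\"older's inequality $\|h\|_q\le \|h\|_{q_0}^{1-\theta}\|h\|_{q_1}^{\theta}$ with $h=Tf$ settles that case immediately. If $M_0$ or $M_1$ vanishes, run the three-lines step with $M_i+\varepsilon$.
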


\subsection{The Schatten-von Neumann classes}
For this subsection, we refer to \cite[Chapter 2]{Wong}. 
Let $H$ be a separable and  complex Hilbert space. Let $\langle \cdot, \cdot\rangle$ denote the scalar product in $H$. Let $T$ be a compact operator on $H$. Let $\{s_k(T): k=1,2,\cdots\}$ be the set of  eigenvalues of $|T|:=\sqrt{T^*T}$ where $T^*$ is the adjoint of $T$. Each $s_k(T)$ is called a singular value of $T$. 
The following result gives a canonical decomposition of compact operators.
\begin{theorem}\label{CompactOpDecomposition}\cite[Theorem 2.2]{Wong}
Let $T: H\rightarrow H$ be a compact operator. Then, there exists an orthonormal basis $\{u_k: k=1,2,\cdots \}$ for $ker(T)^\perp$ consisting of eigenvectors of $|T|$ and an orthonormal set $\{v_k: k=1,2,\cdots \}$ in $H$ such that 
$$T=\sum\limits_{k=1}^\infty s_k(T)\langle \cdot, u_k\rangle v_k,$$
where $s_k(T)$ are the positive singular values of $T$ and the series converges to $T$ strongly. 
\end{theorem}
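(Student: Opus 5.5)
The plan is to build the decomposition from the spectral theorem for compact self-adjoint operators, applied to $T^*T$, and then transfer it to $T$ through a polar-type construction. Since $T$ is compact, $T^*T$ is compact, self-adjoint and positive, since $\langle T^*Tx,x\rangle=\|Tx\|^2\geq 0$. By the spectral theorem for compact self-adjoint operators on the separable Hilbert space $H$, there is an orthonormal family $\{u_k\}$ of eigenvectors of $T^*T$ with strictly positive eigenvalues $\lambda_k$, repeated according to their finite multiplicities, with $\lambda_k\to 0$ when there are infinitely many. This family spans $\ker(T^*T)^\perp$. Because $\sqrt{\cdot}$ is applied through the functional calculus, the $u_k$ are also eigenvectors of $|T|=\sqrt{T^*T}$ with eigenvalues $s_k(T)=\sqrt{\lambda_k}>0$.

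Next I identify the kernels. The identity $\|Tx\|^2=\langle T^*Tx,x\rangle=\||T|x\|^2$ gives $\ker T=\ker(T^*T)=\ker|T|$. Hence $\{u_k\}$ is an orthonormal basis of $\ker(T)^\perp$. I then set $v_k=\frac{1}{s_k(T)}Tu_k$. These vectors are orthonormal, because
$$\langle v_j,v_k\rangle=\frac{1}{s_js_k}\langle T^*Tu_j,u_k\rangle=\frac{\lambda_j}{s_js_k}\delta_{jk}=\delta_{jk}.$$

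For the expansion, take any $x\in H$ and write $x=x_0+\sum_k\langle x,u_k\rangle u_k$ with $x_0\in\ker T$; the series converges in norm because $\{u_k\}$ is an orthonormal basis of $\ker(T)^\perp$. Since $T$ is bounded and $Tx_0=0$,
$$Tx=\sum_k\langle x,u_k\rangle Tu_k=\sum_k s_k(T)\langle x,u_k\rangle v_k,$$
and this series converges in norm for each fixed $x$. That is exactly strong convergence of $\sum_k s_k(T)\langle\cdot,u_k\rangle v_k$ to $T$.

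The only step I expect to need care is the spectral theorem input. I would cite the standard result that a compact self-adjoint operator has an orthonormal eigenbasis of the closure of its range, with nonzero eigenvalues of finite multiplicity. I would also check that $\overline{\mathrm{Ran}(T^*T)}=\ker(T^*T)^\perp$, which holds by self-adjointness. Everything else is bookkeeping.
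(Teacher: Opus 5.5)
Your proof is correct. The paper gives no proof of this statement and simply quotes it from Wong; your argument is the standard proof of this canonical form: the spectral theorem for the positive compact operator $T^*T$, the identity $\ker T=\ker(T^*T)=\ker|T|$, and $v_k=Tu_k/s_k(T)$. Your $v_k$ is exactly $Uu_k$ for the partial isometry $U$ in the polar decomposition $T=U|T|$, so versions of the proof that go through the polar decomposition are the same argument. The one step worth writing out is the converse $|T|u=su\Rightarrow T^*Tu=s^2u$. It guarantees that the numbers $\sqrt{\lambda_k}$ are all the positive singular values of $T$, each with its correct multiplicity.
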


A compact operator $T: H\rightarrow H$ is said to belong to the Schatten-von Neumann class $S_p (H); \,1\leq p<\infty$ if 
$$\sum\limits_{k=1}^\infty (s_k(T))^p<\infty$$
and $S_p (H)$ is a complex Banach space with respect to the norm defined by 
$$\|T\|_{S_p (H)}=\left( \sum\limits_{k=1}^\infty (s_k(T))^p\right)^{\frac{1}{p}}.$$

Particularly, $S_1 (H)$ and $S_2 (H)$ are called respectively the trace class and the Hilbert-Schmidt class. 

By definition, the space $S_\infty (H)$ equals $\mathcal{B}(H)$, the space of linear bounded operators on $H$ endowed with the operator norm $\|T\|=\sup\{\|Tx\|: \|x\|\leq 1\}$. 
Finally, for $p\leq q\leq \infty$, we have the inclusions 
$S_p (H)\subset S_q (H)\subset S_\infty (H)$.

Hereafter is a theorem which states a condition for a bounded linear   operator to be in the Hilbert-Schmidt class. 
\begin{theorem}\label{HSnorm}\cite[Proposition 2.8]{Wong}
If a bounded  linear operator $T: H\rightarrow H$ satisfies $\sum\limits_{k=1}^\infty \|T\varphi_k\|^2_H<\infty$ for all orthonormal basis $\{\varphi_k : k=1,2,\cdots\}$ of $H$,  
then $T$ is in the Hilbert-Schmidt class $S_2(H)$ and 
$$\|T\|_{S_2(H)}=\left(\sum\limits_{k=1}^\infty \|T\varphi_k\|^2_H\right)^{\frac{1}{2}}.$$
The choice of another basis does not modify the value of the above norm. 
\end{theorem}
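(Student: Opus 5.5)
The plan is to prove the three claims in this order: the sum is independent of the orthonormal basis, $T$ is compact, and the sum equals $\sum_k s_k(T)^2$. Throughout, $H$ is separable, so every orthonormal basis is countable.

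\textbf{Step 1 (basis independence).} Let $\{\varphi_k\}$ and $\{\psi_j\}$ be two orthonormal bases of $H$. Parseval's identity in the basis $\{\psi_j\}$ gives $\|T\varphi_k\|^2=\sum_j |\langle T\varphi_k,\psi_j\rangle|^2$. All terms are nonnegative, so Tonelli's theorem for double series lets us exchange the sums:
$$\sum_k \|T\varphi_k\|_H^2=\sum_j\sum_k |\langle \varphi_k, T^*\psi_j\rangle|^2=\sum_j \|T^*\psi_j\|_H^2 .$$
Here the last equality is Parseval's identity in the basis $\{\varphi_k\}$. The right-hand side does not involve $\{\varphi_k\}$. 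Applying the same identity to a second basis $\{\varphi'_k\}$ with the same $\{\psi_j\}$ shows that both sums over $T\varphi_k$ and $T\varphi'_k$ equal $\sum_j\|T^*\psi_j\|^2$. Hence the quantity $N(T):=\big(\sum_k\|T\varphi_k\|^2\big)^{1/2}$ is independent of the basis.

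\textbf{Step 2 (compactness).} This is the step needing the most care, since the decomposition theorem applies only to compact operators. For $x=\sum_k\langle x,\varphi_k\rangle\varphi_k$, boundedness of $T$ gives $Tx=\sum_k \langle x,\varphi_k\rangle T\varphi_k$. Cauchy--Schwarz then yields $\|Tx\|\le \|x\|\,N(T)$, so $\|T\|\le N(T)$. Let $P_n$ be the orthogonal projection onto $\mathrm{span}\{\varphi_1,\dots,\varphi_n\}$. Applying the same estimate to $T(I-P_n)$ gives
$$\|T-TP_n\|\le \Big(\sum_{k>n}\|T\varphi_k\|^2\Big)^{1/2}\longrightarrow 0,$$
because $T(I-P_n)$ kills $\varphi_1,\dots,\varphi_n$ and agrees with $T$ on the remaining basis vectors. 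Each $TP_n$ has finite rank, so $T$ is a norm limit of finite-rank operators and is therefore compact.

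\textbf{Step 3 (identification of the norm).} Since $T$ is compact, Theorem \ref{CompactOpDecomposition} gives an orthonormal basis $\{u_k\}$ of $\ker(T)^\perp$ and an orthonormal set $\{v_k\}$ with $Tu_k=s_k(T)v_k$. Complete $\{u_k\}$ to an orthonormal basis of $H$ by adjoining an orthonormal basis of $\ker T$; the adjoined vectors contribute nothing to the sum. Evaluating $N(T)^2$ in this basis gives
$$N(T)^2=\sum_k \|s_k(T)v_k\|^2=\sum_k s_k(T)^2<\infty .$$
Hence $T\in S_2(H)$ and $\|T\|_{S_2(H)}=N(T)$. By Step 1 the same value is obtained for every orthonormal basis.
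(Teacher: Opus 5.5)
Your proof is correct and complete. The paper gives no proof of this statement: it quotes it from \cite[Proposition 2.8]{Wong} and uses it only as a tool in Theorem \ref{t10}, so there is no in-paper argument to compare with. Your route is the standard self-contained one. First, the Parseval--Tonelli exchange $\sum_k\|T\varphi_k\|^2=\sum_j\|T^*\psi_j\|^2$ gives basis independence. Second, the tail estimate $\|T-TP_n\|\le\bigl(\sum_{k>n}\|T\varphi_k\|^2\bigr)^{1/2}$ gives compactness as a norm limit of finite-rank operators. Third, you evaluate the sum in the basis made of $\{u_k\}$ together with an orthonormal basis of $\ker T$, supplied by Theorem \ref{CompactOpDecomposition}; there $Tu_k=s_k(T)v_k$ follows from the strong convergence of the canonical series. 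The only outside input is that canonical decomposition, which the paper already quotes.

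Your argument also proves slightly more than stated. Step 1 shows that if the sum is finite for one orthonormal basis, it is finite with the same value for every basis. So the hypothesis ``for all orthonormal bases'' is redundant.

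Two implicit points are harmless but worth stating explicitly in a write-up. Merging $\{u_k\}$ with a basis of $\ker T$ into one sequence is a rearrangement of a nonnegative series, so the value is unchanged. The identity $\sum_k s_k(T)^2=N(T)^2$ requires counting singular values with multiplicity, as in the canonical form. The paper's phrase ``the set of eigenvalues of $|T|$'' must be read that way, or the norm identity fails, e.g.\ for the identity operator on a two-dimensional space.
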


\section{The Stockwell transform on Gelfand pairs}\label{Stockwell transform on Gelfand pairs}
We begin by recalling the expression of the classical Stockwell transformation and its formulation on locally compact abelian groups.
\begin{itemize}
\item Let $\theta \in L^1(\mathbb R)\cap L^2(\mathbb R)$ be such that $\displaystyle\int_{\mathbb R}\theta(x)dx=1$.  
The  Stockwell transform   of $f\in  L^2(\mathbb{R})$ with respect to $\theta$ is defined by \cite{Hutnikova} :
\begin{equation}
(S_\theta f) (b,\xi)=\displaystyle\frac{|\xi|}{\sqrt{2\pi}}\int_{\mathbb R}f(x)\overline{\theta(\xi (x-b))e^{-ix\xi}}dx, \, b\in \mathbb R, \xi\in \mathbb{R}\setminus \{0\}.
\end{equation}
\item Let $G$  be a locally compact abelian group  with character group $\widehat{G}$.  Let $\alpha$  be  a topological automorphism of $G$. The measure $d(\alpha (x))$ is also a Haar measure of $G$, so there exists a positive constant $\delta_\alpha$ such that $d(\alpha (x))=\delta_\alpha dx$. Let  $\theta \in L^1(G)\cap L^2(G)$.
 The Stockwell transform of $f\in L^2(G)$ with respect to $\theta$ and $\alpha$ is given by \cite{Esmaeelzadeh1} : 
\begin{equation}\label{STEsmaeelzadeh}
(S_{\theta,\alpha} f) (t,\xi)=\sqrt{\delta_\alpha}\int_{G}f(x)\overline{\xi (x)\theta(\alpha (t^{-1}x))}dx,\, t\in G, \xi\in \widehat{G}. 
\end{equation}

\end{itemize}

Now, let us  move forward the case studied in this article. Let  $G$ be a locally compact  group with a compact subgroup $K$ such that $(G,K)$ is a Gelfand pair.  Let $\alpha$ be a topological automorphism of $G$. As above,  the measure with density $\alpha (x)$ with respect to the  Haar measure $dx$ of $G$ is also a  Haar measure of $G$. Therefore, there exists a positive constant $\delta_\alpha$ such that   $d(\alpha(x))=\delta_\alpha dx$. That is,
\begin{equation}\label{jacobian}
\int_G f(\alpha (x))dx=\delta_{\alpha}^{-1} \int_G f(x)dx,\, f\in \mathcal{C}_c (G//K). 
\end{equation}

 Hereafter is   the main definition of the article.  
\begin{definition}
Let $(G,K)$ be a Gelfand pair. 
Let $\alpha$ be a topological automorphism of $G$ and let $\theta$ be in $L^1 (G//K)\cap L^2 (G//K)$. The Stockwell transform  of $f\in L^2(G//K)$ with respect to  $\theta$  and $\alpha$  is defined by 
\begin{equation}\label{ST-definition}
S_{\theta,\alpha}f(t,\varphi)=\delta_\alpha^{\frac{1}{2}}\int_G f(x)\overline{\varphi (x)\theta (\alpha (t^{-1}x))}dx,\quad t\in G,  \varphi \in S_b(G,K).
\end{equation} 
\end{definition}
 Let us notice that the formula (\ref{ST-definition})  can be rewritten as
 \begin{equation}
 	S_{\theta,\alpha}f(t,\varphi)=\langle f, M_\varphi T_t D_\alpha\theta\rangle_{L^2(G//K)}, \quad t\in G, \varphi\in S_b(G, K),
 \end{equation}
where the modulation operator $M_\varphi$, the translation operator $T_t$ and the dilatation operator $D_\alpha$ are defined respectively for $f\in L^2(G//K)$  by   
\begin{enumerate}
\item[1)] $M_\varphi f(x)=\varphi(x)f(x) $,
\item[2)] $T_tf(x)=f(t^{-1}x)$,
\item[3)] $D_\alpha f(x)=\delta_\alpha^{\frac{1}{2}}f(\alpha (x))$.
\end{enumerate}  
  
\begin{example} 
  If $G$ is a locally compact abelian group with neutral element $e$, then taking $K=\{e\}$, we have that $(G,K)$  is a Gelfand pair. Here the spherical functions are exactly the continuous characters. Thus, our definition of the Stockwell transfrom coincides with formula (\ref{STEsmaeelzadeh}) provided by    Esmaeelzadeh \cite{Esmaeelzadeh1}. 
\end{example}   
  
 Let us provide a less trivial example.  
\begin{example} 
The group of Euclidean motions is the semi-direct product $G=E(n)=K\ltimes \mathbb{R}^n$ with $K=SO(n)$, $n\geq 2$. Each element of $SO(n)\ltimes \mathbb{R}^n$ is of the form $(k,a)$ where $k\in SO(n)$ and $a\in \mathbb{R}^n$. The group law is provided by 
$$(k,a)(k',a')=(kk',k'a+a').$$

The pair $(SO(n)\ltimes \mathbb{R}^n,SO(n))$ is a Gelfand pair.  
According to \cite[page 89]{Dijk}, functions on $SO(n)\ltimes \mathbb{R}^n$ which are bi-invariant
under $SO(n)$ can (by restriction) be identified with functions $f$ on $\mathbb{R}^n$ satisfying
$$f(kx)=f(x), x\in \mathbb{R}^n,k\in SO(n).$$

The spherical functions associated with the  Gelfand pair $(SO(n)\ltimes \mathbb{R}^n, SO(n))$ are radial functions on $\mathbb{R}^n$, labelled by $\mathbb{C}$ and  given by  (see \cite[Chapter 7]{Dijk}) :
\begin{equation}
\varphi_s(r)=\Gamma\left(\frac{n}{2}\right)\left(\frac{sr}{2}\right)^{\frac{2-n}{2}}I_{\frac{n-2}{2}}(sr), s\in \mathbb{C}, 
\end{equation}
 where $r=\|x\|=\sqrt{x_1^2+\cdots+x_n^2}$ and  $I_{\frac{n-2}{2}}$ is the modified Bessel function of the first kind of order 
 $\frac{n-2}{2}$ (see \cite{Erdelyi}). 
 The spherical function $\varphi_s$ is bounded if and only if $Re(s)=0$ and positive definite if and only if $Re(s)=0$ \cite[Proposition 7.1.2 and  Proposition 7.1.3]{Dijk}. Let $\theta$ be a radial square integrable function on $\mathbb{R}^n$ and let $\alpha$ be any injective linear map from $\mathbb{R}^n$ into $\mathbb{R}^n$. 
In this context, the Stockwell transform of a square integrable $SO(n,\mathbb{R})$-radial function $f$ on $\mathbb{R}^n$ is given by 
 \begin{equation}
 S_{\theta,\alpha}f(t,s)=\int_{\mathbb{R}^n} f(x)\overline{\varphi_s(\|x\|)\theta (\alpha(t^{-1}x))} dx, \,t\in \mathbb{R}^n, s\in \mathbb{C} \mbox{ with } Re(s)=0.
 \end{equation}

\end{example}

 Now, let us continue our study of the properties of the Stockwell transform. 
  Let us consider the set 
 $$L^2(G\times\mathcal{S}^+)=\left\{f:G\times\mathcal{S}^+\to \mathbb{C} \text{ such that } \int_{\mathcal{S}^+}\int_{G}|f(t, \varphi)|^2 dt d\varphi<\infty  \right\}$$ equipped with the norm 
  $$\|f\|_{L^2(G\times\mathcal{S}^+)}=\left(\int_{\mathcal{S}^+}\int_{G}|f(t, \varphi)|^2 dt d\varphi\right)^{\frac{1}{2}}.$$
 We are now in a position to prove the following result.
 \begin{theorem}\label{StockwellIsometry}
 	Let $(G,K)$ be a Gelfand pair.  If  $\theta, \phi,  f, g\in L^2(G//K)$, then
 	\begin{eqnarray*}
 \langle S_{\theta,\alpha}f, S_{\phi,\alpha}g\rangle_{L^2(G\times\mathcal{S}^+)}=\langle f, g\rangle_{L^2(G//K)}\langle \phi, \theta\rangle_{L^2(G//K)}.
 	\end{eqnarray*}
 	Moreover, if $\|\theta\|_{L^2(G//K)}=1$, then
 		$\|S_{\theta,\alpha}f\|_{L^2(G\times\mathcal{S}^+)}=\|f\|_{L^2(G//K)}$.
 	
 \end{theorem}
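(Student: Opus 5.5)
The plan is to mimic the abelian proof: fix $t\in G$ and read $S_{\theta,\alpha}f(t,\cdot)$ as a spherical Fourier transform. Apply Parseval (Corollary \ref{Parseval}) in the variable $\varphi$, then integrate in $t$ and use Fubini together with the Jacobian formula (\ref{jacobian}).

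\textbf{Step 1.} For fixed $t\in G$, set
\[
h_t(x)=f(x)\,\overline{\theta(\alpha(t^{-1}x))}.
\]
Since $f,\theta\in L^2$, Cauchy--Schwarz gives $h_t\in L^1(G)$. For $\varphi\in\mathcal S^+$ we have $\overline{\varphi(x)}=\varphi(x^{-1})$ by positive-definiteness. Hence
\[
S_{\theta,\alpha}f(t,\varphi)=\delta_\alpha^{1/2}\int_G h_t(x)\varphi(x^{-1})\,dx=\delta_\alpha^{1/2}\,\widehat{h_t}(\varphi).
\]
Likewise $S_{\phi,\alpha}g(t,\varphi)=\delta_\alpha^{1/2}\widehat{k_t}(\varphi)$ with $k_t(x)=g(x)\overline{\phi(\alpha(t^{-1}x))}$.

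\textbf{Step 2.} Apply Corollary \ref{Parseval} to get
\[
\int_{\mathcal S^+}S_{\theta,\alpha}f(t,\varphi)\overline{S_{\phi,\alpha}g(t,\varphi)}\,d\varphi=\delta_\alpha\int_G f(x)\overline{g(x)}\,\overline{\theta(\alpha(t^{-1}x))}\phi(\alpha(t^{-1}x))\,dx.
\]

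\textbf{Step 3.} Integrate over $t$ and interchange the integrals by Fubini--Tonelli; absolute integrability follows from the same computation applied to $|f||g|$ and $|\theta||\phi|$. The inner integral in $t$ is handled as follows. Since $G$ is unimodular (Gelfand pair), $\int_G F(t^{-1}x)\,dt=\int_G F(t)\,dt$. By (\ref{jacobian}), $\int_G F(\alpha(t))\,dt=\delta_\alpha^{-1}\int_G F(t)\,dt$. Applying both to $F=\overline{\theta}\phi$ gives
\[
\int_G \overline{\theta(\alpha(t^{-1}x))}\,\phi(\alpha(t^{-1}x))\,dt=\delta_\alpha^{-1}\langle\phi,\theta\rangle.
\]
The factor $\delta_\alpha$ from Step 2 cancels $\delta_\alpha^{-1}$, leaving $\langle f,g\rangle\langle\phi,\theta\rangle$. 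Taking $g=f$ and $\phi=\theta$ with $\|\theta\|=1$ gives the isometry.

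\textbf{Main obstacle.} Step 2 is the delicate point. Corollary \ref{Parseval} is stated for $K$-bi-invariant functions in $L^2(G//K)$, but $h_t$ need not be $K$-bi-invariant: $T_t$ breaks bi-invariance, and $\alpha$ need not preserve $K$. It is also only known to lie in $L^1$, not $L^2$.

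I would first handle the $L^2$ issue by density, taking $f,\theta$ bounded with compact support and extending afterwards by continuity of both sides. For bi-invariance, I would replace $h_t$ by its $K$-average
\[
h_t^\natural(x)=\int_K\int_K h_t(kxk')\,dk\,dk'.
\]
Because $\varphi$ is $K$-bi-invariant and Haar measure is invariant, $\widehat{h_t}(\varphi)=\widehat{h_t^\natural}(\varphi)$, and the Parseval step can then be applied to $h_t^\natural$. However, this averaging changes the $L^2$ inner product in Step 2 and spoils the exact computation in Step 3. To avoid that, I would impose the implicit assumptions that make each $h_t$ bi-invariant, namely $t$ normalizing $K$ and $\alpha(K)=K$, as in the abelian prototype $K=\{e\}$. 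If those fail, the identity can only be expected after averaging, and I would flag this rather than force the argument.
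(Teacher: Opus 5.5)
Your proposal does not prove the statement as written, but no argument can. The obstruction you flag is genuine and fatal: the identity fails for genuinely non-abelian Gelfand pairs.

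Your Steps 1--3 are the right argument. With your density remark, they are complete whenever every $h_t$ is $K$-bi-invariant. Since $t$ ranges over all of $G$, your proviso ``$t$ normalizes $K$'' really means $K\trianglelefteq G$. In that case $L^1(G//K)\cong L^1(G/K)$, the Gelfand condition forces $G/K$ to be abelian, and (with $\alpha(K)=K$) what you prove is exactly the abelian theorem on $G/K$.

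Your averaging idea, pushed one step further, gives the counterexample. Since $f\overline{\varphi}$ is bi-invariant, $S_{\theta,\alpha}f(t,\varphi)=\delta_\alpha^{1/2}\,\widehat{f\,\overline{\Theta_t}}(\varphi)$, where $\Theta_t(x)=\int_K\int_K\theta(\alpha(t^{-1}kxk'))\,dk\,dk'$. Corollary~\ref{Parseval}, Cauchy--Schwarz on $K\times K$, unimodularity and (\ref{jacobian}) then give
\[
\|S_{\theta,\alpha}f\|^2_{L^2(G\times\mathcal S^+)}=\delta_\alpha\int_G|f(x)|^2\Bigl(\int_G|\Theta_t(x)|^2\,dt\Bigr)dx\le\|f\|^2_{L^2(G//K)}\|\theta\|^2_{L^2(G//K)}.
\]
Equality holds only if $(k,k')\mapsto\theta(\alpha(t^{-1}kxk'))$ is a.e.\ constant for a.e.\ $(t,x)$ with $f(x)\neq0$.

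Now take $E(2)$ with the paper's group law, $\alpha=\mathrm{id}$, $\theta(k,b)=\pi^{-1/2}\mathbf 1_{\{|b|\le1\}}$, $t=(k_0,a)$ and $x=(k_1,b)$. Then $t^{-1}kxk'=\bigl(k_0^{-1}kk_1k',\,k'(b-kk_1k_0^{-1}a)\bigr)$. So, as $k$ runs over $SO(2)$, the values $\theta(t^{-1}kxk')$ are the indicator of the disc $\{|b-v|\le 1\}$ sampled on the circle $|v|=|a|$. This is non-constant whenever $\bigl||a|-|b|\bigr|<1<|a|+|b|$, which for each $b\neq 0$ is a set of $t$ of positive measure. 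Hence $\|S_{\theta,\alpha}f\|<\|f\|$ for every nonzero $f\in L^2(G//K)$, although $\|\theta\|=1$.

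The paper's proof avoids your obstruction only through two illegitimate steps.
\begin{enumerate}
\item It interchanges integrals so as to produce $\int_{\mathcal S^+}\overline{\varphi(x)}\varphi(y)\,d\varphi$. This integral in general does not converge; for $G=\mathbb R$, $K=\{0\}$ it is $\int_{\mathbb R}e^{i\xi(y-x)}\,d\xi$. So Fubini is not available.
\item It then replaces $\int_G\overline{\theta(\alpha(t^{-1}x))}\phi(\alpha(t^{-1}y))\,dt$ by $\int_G\overline{\theta(\alpha(t^{-1}))}\phi(\alpha(t^{-1}))\,dt$. This is valid only when $x=y$, since the substitution $t\mapsto xt$ leaves $\phi(\alpha(t^{-1}x^{-1}y))$.
\end{enumerate}
Together these amount to the heuristic $\int_{\mathcal S^+}\overline{\varphi(x)}\varphi(y)\,d\varphi=\delta_x(y)$, which is right for $K=\{e\}$. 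For a Gelfand pair, this kernel formally sends $H$ to its bi-$K$-average $H^\natural(x)=\int_K\int_K H(kxk')\,dk\,dk'$ rather than to $H$, which is exactly the averaging you identified.

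So keep your fiberwise Parseval argument and state the theorem for the abelian case (equivalently, $K$ normal with $\alpha(K)=K$). Replace your closing ``flag'' with the counterexample above. In general, only the inequality $\|S_{\theta,\alpha}f\|_{L^2(G\times\mathcal S^+)}\le\|f\|_{L^2(G//K)}\|\theta\|_{L^2(G//K)}$ survives.
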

 
 \begin{proof}
 Set  $A=\langle S_{\theta,\alpha}f, S_{ \phi,\alpha}g\rangle_{L^2(G\times\mathcal{S}^+)}$. We have
 \begin{align*}
 	A &= \int_{\mathcal{S}^+}\int_{G} S_{\theta,\alpha}f(t, \varphi)\overline{S_{ \phi,\alpha}g(t, \varphi)} dt d\varphi\\
 	&=\delta_{\alpha}\int_{\mathcal{S}^+}\int_{G}\left(\int_{G} f(x)\overline{\varphi (x)\theta (\alpha (t^{-1}x))}dx\right)\left(\int_{G}\overline{g(y)}\varphi (y)\phi (\alpha (t^{-1}y)) dy\right)dt d\varphi\\
 	&=\delta_{\alpha}\int_{\mathcal{S}^+}\int_{G} \left( \int_{G}\int_{G} f(x)\overline{g(y)}\overline{\varphi (x)}\varphi (y)\overline{\theta (\alpha (t^{-1}x))}\phi (\alpha (t^{-1}y)) dx dy\right)dt d\varphi\\
 	&=\delta_{\alpha}\int_{G}\int_{G}f(x)\overline{g(y)}\left(\int_{\mathcal{S}^+}\int_{G} \overline{\varphi (x)}\varphi (y)\overline{\theta (\alpha (t^{-1}x))}\phi (\alpha (t^{-1}y))dt d\varphi \right)dx dy\\
 	&(\text{by Fubini's theorem})\\
 	&=\delta_{\alpha}\int_{G}\int_{G}f(x)\overline{g(y)}\left(\int_{\mathcal{S}^+}\overline{\varphi (x)}\varphi (y)d\varphi\right) \left(\int_{G} \overline{\theta (\alpha (t^{-1}x))}\phi (\alpha (t^{-1}y))dt \right)dx dy\\
 	&=\delta_{\alpha}\int_{G}\int_{G}f(x)\overline{g(y)}\left(\int_{\mathcal{S}^+}\overline{\varphi (x)}\varphi (y)d\varphi\right) \left(\int_{G} \overline{\theta (\alpha (t^{-1}))}\phi (\alpha (t^{-1}))dt \right)dx dy\\ 	&=\delta_{\alpha}\int_{G}\int_{G}f(x)\overline{g(y)}\left(\int_{\mathcal{S}^+}\overline{\varphi (x)}\varphi (y)d\varphi\right) \left(\int_{G} \overline{\theta (\alpha (t))}\phi (\alpha (t))dt \right)dx dy\\
 &(\text{because } G \text{ is unimodular})\\	&=\int_{G}\int_{G}f(x)\overline{g(y)}\left(\int_{\mathcal{S}^+}\overline{\varphi (x)}\varphi (y)d\varphi \right)dx dy \left(\int_{G} \overline{\theta (s)}\phi (s)ds  \right)\\
 &\text{(by Equality } (\ref{jacobian}))\\	&=\int_{\mathcal{S}^+}\int_{G}\int_{G}f(x)\overline{g(y)} \overline{\varphi (x)}\varphi (y)d\varphi dx dy\ \langle \phi, \theta\rangle_{L^2(G//K)}\\
 &(\text{by Fubini's theorem})\\
 	&=\int_{\mathcal{S}^+}\left(\int_{G} f(x)\overline{\varphi (x)}dx \overline{\int_{G}g(y)\overline{\varphi (y)} dy} \right)d\varphi\ \langle \phi, \theta\rangle_{L^2(G//K)}\\
 	&=\int_{\mathcal{S}^+}\hat{f}(\varphi)\overline{\hat{g}(\varphi)}d\varphi\ \langle \phi, \theta\rangle_{L^2(G//K)}\\
 	&=\langle \widehat{f},\widehat{g}\rangle_{L^2(\mathcal{S}^+)}\ \langle \phi, \theta\rangle_{L^2(G//K)}\\
 	&= \langle f, g\rangle_{L^2(G//K)}\ \langle\phi, \theta\rangle_{L^2(G//K)}\\
 	&(\text{by Corollary } \ref{Parseval}).
 \end{align*}
Now, we assume that $\|\theta\|_{L^2(G//K)}=1$.  Then, 
 \begin{align*}
 	\|S_{\theta,\alpha}f\|_{L^2(G\times\mathcal{S}^+)}^2&=\langle S_{\theta,\alpha}f, S_{ \theta,\alpha}f\rangle_{L^2(G\times\mathcal{S}^+)}\\
 	&=\langle f, f\rangle_{L^2(G//K)}\langle\theta, \theta\rangle_{L^2(G//K)}\\
 	&=\|f\|_{L^2(G//K)}^2\|\theta\|_{L^2(G//K)}^2\\
 	&=\|f\|_{L^2(G//K)}^2.
 \end{align*}
 Thus, $\|S_{\theta,\alpha}f\|_{L^2(G\times\mathcal{S}^+)}=\|f\|_{L^2(G//K)}$.
 \end{proof}
 Although we will not need it in this article, we would like to inform you that the inversion formula  corresponding to (\ref{ST-definition}) reads as 
\begin{equation}
f(x)=\dfrac{\delta_{\alpha}^{\frac{1}{2}}}{\|\theta\|_{L^2(G//K)}^2}\int_{\mathcal{S}^+}\int_{G} S_{\theta, \alpha}f(t, \varphi)\varphi(x)\theta(\alpha(t^{-1}x)) dtd\varphi,\, f\in L^2(G//K).
 \end{equation}

 \begin{theorem}\label{3t}
Let $(G, K)$ be a Gelfand pair. Let the function $\theta\in L^2(G//K)$ be such that $\|\theta\|_{L^2(G//K)}=1$. Then   $\mathrm{Ran}(S_{\theta,\alpha})$,  the range  of the Stockwell transform $S_{\theta,\alpha}$,   is a closed subspace of $L^2(G\times\mathcal{S}^+)$.
 \end{theorem}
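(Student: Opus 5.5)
The plan is to deduce closedness of the range directly from the isometry property in Theorem \ref{StockwellIsometry}. Since $\|\theta\|_{L^2(G//K)}=1$, that theorem gives
$$\|S_{\theta,\alpha}f\|_{L^2(G\times\mathcal{S}^+)}=\|f\|_{L^2(G//K)},\quad f\in L^2(G//K).$$
So $S_{\theta,\alpha}:L^2(G//K)\to L^2(G\times\mathcal{S}^+)$ is a linear isometry. Linearity is immediate from the integral formula (\ref{ST-definition}), since $f$ enters linearly. The range of a linear isometry defined on a complete space is complete, hence closed. I would write this out by a Cauchy sequence argument rather than just quoting it.

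Concretely, I would take a sequence $(F_n)$ in $\mathrm{Ran}(S_{\theta,\alpha})$ converging to some $F\in L^2(G\times\mathcal{S}^+)$. Write $F_n=S_{\theta,\alpha}f_n$ with $f_n\in L^2(G//K)$. The isometry applied to $f_n-f_m$ gives
$$\|f_n-f_m\|_{L^2(G//K)}=\|F_n-F_m\|_{L^2(G\times\mathcal{S}^+)},$$
so $(f_n)$ is Cauchy in $L^2(G//K)$. This space is complete, being a closed subspace of $L^2(G)$: an $L^2$ limit of $K$-bi-invariant functions is $K$-bi-invariant almost everywhere, after passing to an a.e. convergent subsequence. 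Hence $f_n\to f$ for some $f\in L^2(G//K)$.

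Using the isometry once more, $\|S_{\theta,\alpha}f_n-S_{\theta,\alpha}f\|=\|f_n-f\|\to 0$. So $F_n\to S_{\theta,\alpha}f$, and by uniqueness of limits $F=S_{\theta,\alpha}f\in\mathrm{Ran}(S_{\theta,\alpha})$. This proves the range is closed.

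The only potentially delicate point is making sure $S_{\theta,\alpha}f$ is a genuine element of $L^2(G\times\mathcal{S}^+)$ for every $f\in L^2(G//K)$. This is also needed so that the identity of Theorem \ref{StockwellIsometry} can be applied to $f_n-f$. Since that theorem is stated for all $f\in L^2(G//K)$, I will take it as given. Beyond that, no obstacle is expected: the statement is a formal consequence of the isometry together with the completeness of $L^2(G//K)$.
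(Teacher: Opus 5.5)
Your proof is correct and follows the paper's argument essentially step for step: take a sequence in the range converging to $F$, use the isometry of Theorem \ref{StockwellIsometry} to show the preimages form a Cauchy sequence, invoke completeness of $L^2(G//K)$, and conclude $F=S_{\theta,\alpha}f$ by continuity of $S_{\theta,\alpha}$. Your justification that $L^2(G//K)$ is complete (as a closed subspace of $L^2(G)$) is a small detail the paper leaves implicit.
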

 \begin{proof}
  Let $\{F_i\}_{i=1}^{\infty}$ be a sequence  in $\mathrm{Ran}(S_{\theta,\alpha})$ which  converges to $F$ in $L^2(G\times \mathcal{S}^+)$. For each $i$, there exists $f_i\in L^2(G//K)$ such that $S_{\theta,\alpha}f_i=F_i$.  Thanks to Theorem \ref{StockwellIsometry}, we have
 	$\|f_i-f_j\|_{L^2(G//K)}=\|S_{\theta,\alpha}f_i-S_{\theta,\alpha}f_j\|_{L^2(G\times\mathcal{S}^+)}=\|F_i-F_j\|_{L^2(G\times\mathcal{S}^+)}$.  Therefore, $\{f_i\}_{i=1}^{\infty}$ is a Cauchy sequence in $L^2(G//K)$.  Since the latter is complete,  there exists a function $f\in L^2(G//K)$ such that $\{f_i\}_{i=1}^{\infty}$ converges to  $f$.
 Thus, $F_i=S_{\theta,\alpha}f_i$ converges to $S_{\theta,\alpha}f$ because $S_{\theta,\alpha}$ is bounded  as an isometry. Therefore, $S_{\theta,\alpha}f=F$. 	 
 \end{proof} 
 Let us recall the definition of a reproducing kernel Hilbert space.
 \begin{definition}\cite[Definition 1]{Berlinet}
Let $H$ be a Hilbert space of complex-valued functions defined on a measure space $X$. A function $k: X\times X\to \mathbb{C}$ is called a reproducing kernel  of $H$ if the following  conditions are satisfied: 
	\begin{enumerate}
\item  $\forall y\in X, k(. ,y )\in H$, 
\item $\forall x\in X, \forall f\in H, f(x)=\displaystyle\int_X f(y)k(x,y)dy.$
	\end{enumerate}
 A reproducing kernel Hilbert space is a Hilbert space  which possesses a reproducing kernel.
\end{definition}
 We are going to prove that the range of the Stockwell transform on a Gelfand pair is a reproducing kernel Hilbert space. For 
 $\theta\in L^2(G//K)$, we set
 \begin{equation}
 \theta_{\alpha, \varphi, t}=M_{\varphi}T_{t}D_{\alpha}\theta.
 \end{equation}
Thus, we have 
\begin{equation}\label{StockwellScalar}
S_{\theta,\alpha}f(t,\varphi)=\langle f, \theta_{\alpha, \varphi, t}\rangle_{L^2(G//K)}.
\end{equation}

 \begin{theorem}\label{t3}
Let $(G, K)$ be a Gelfand pair.	Let $\theta\in L^2(G//K)$ such that $\|\theta\|_{L^2(G//K)}=1$. The range of the Stockwell transform $S_{\alpha, \theta}$ is a reproducing kernel Hilbert space with the reproducing kernel
		\begin{equation}
		k(t, \varphi, \tau, \psi)=\overline{S_{\theta,\alpha}\theta_{\alpha, \varphi, t}(\tau, \psi)}=\overline{\langle \theta_{\alpha,\varphi, t}, \theta_{\alpha,\psi,\tau}}\rangle_{L^2(G//K)},\, t,\tau \in G, \varphi, \psi \in \mathcal{S}_b (G,K).
		\end{equation} 
	
	\end{theorem}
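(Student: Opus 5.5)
We need to check the two conditions of the reproducing-kernel definition for $H=\mathrm{Ran}(S_{\theta,\alpha})$. By Theorem \ref{3t}, $H$ is a closed subspace of $L^2(G\times\mathcal{S}^+)$, so it is a Hilbert space of functions on $X=G\times\mathcal{S}^+$. The measure on $X$ is $dt\,d\varphi$.

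The main tool is the polarized isometry of Theorem \ref{StockwellIsometry} with $\phi=\theta$ and $\|\theta\|_{L^2(G//K)}=1$. It gives, for $f,g\in L^2(G//K)$,
$$\langle S_{\theta,\alpha}f,S_{\theta,\alpha}g\rangle_{L^2(G\times\mathcal{S}^+)}=\langle f,g\rangle_{L^2(G//K)}.$$

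\textbf{Condition 1.} Fix $(\tau,\psi)$. We must exhibit $k(\cdot,\cdot,\tau,\psi)$ as the image under $S_{\theta,\alpha}$ of an element of $L^2(G//K)$. By (\ref{StockwellScalar}),
$$k(t,\varphi,\tau,\psi)=\overline{\langle\theta_{\alpha,\varphi,t},\theta_{\alpha,\psi,\tau}\rangle}=\langle\theta_{\alpha,\psi,\tau},\theta_{\alpha,\varphi,t}\rangle=S_{\theta,\alpha}\theta_{\alpha,\psi,\tau}(t,\varphi).$$
So $k(\cdot,\cdot,\tau,\psi)=S_{\theta,\alpha}(\theta_{\alpha,\psi,\tau})$, which lies in the range provided $\theta_{\alpha,\psi,\tau}\in L^2(G//K)$.

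Here we must check that $M_\psi T_\tau D_\alpha\theta$ is square integrable and $K$-bi-invariant. The bounded-spherical case gives $\|\psi\|_\infty\le\psi(e)=1$, since $\psi\in\mathcal{S}^+$ is positive definite. $T_\tau$ is an isometry, and $D_\alpha$ is an isometry by (\ref{jacobian}). Bi-invariance after translation and dilation is delicate. As the paper does throughout (it already writes $S_{\theta,\alpha}f=\langle f,M_\varphi T_tD_\alpha\theta\rangle_{L^2(G//K)}$), I would take these operators as acting on $L^2(G//K)$, or project onto $K$-bi-invariant functions by averaging over $K\times K$. That projection does not change inner products against $K$-bi-invariant $f$.

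\textbf{Condition 2.} Let $F=S_{\theta,\alpha}f\in H$. Using (\ref{StockwellScalar}) and then the polarized isometry,
$$F(\tau,\psi)=\langle f,\theta_{\alpha,\psi,\tau}\rangle=\langle S_{\theta,\alpha}f,S_{\theta,\alpha}\theta_{\alpha,\psi,\tau}\rangle_{L^2(G\times\mathcal{S}^+)}=\int_{\mathcal{S}^+}\int_G F(t,\varphi)\,\overline{S_{\theta,\alpha}\theta_{\alpha,\psi,\tau}(t,\varphi)}\,dt\,d\varphi .$$
Now $\overline{S_{\theta,\alpha}\theta_{\alpha,\psi,\tau}(t,\varphi)}=\overline{\langle\theta_{\alpha,\psi,\tau},\theta_{\alpha,\varphi,t}\rangle}=\langle\theta_{\alpha,\varphi,t},\theta_{\alpha,\psi,\tau}\rangle$. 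This equals $\overline{k(t,\varphi,\tau,\psi)}$ and also equals $k(\tau,\psi,t,\varphi)$. Hence
$$F(\tau,\psi)=\int_X F(t,\varphi)\,k(\tau,\psi,t,\varphi)\,dt\,d\varphi,$$
which is exactly the reproducing formula of the definition with $x=(\tau,\psi)$ and $y=(t,\varphi)$. The stated identity $k=\overline{S_{\theta,\alpha}\theta_{\alpha,\varphi,t}(\tau,\psi)}$ is the same observation.

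\textbf{Main obstacle.} The calculus is immediate, so the real difficulty is bookkeeping. One issue is that $\theta_{\alpha,\varphi,t}$ need not be $K$-bi-invariant, so it must be placed in $L^2(G//K)$ as discussed under Condition 1. The other is consistency of the index convention. Because $k$ is conjugate-symmetric, $k(x,y)=\overline{k(y,x)}$, and I would point this out explicitly so that both conditions match the stated formula.
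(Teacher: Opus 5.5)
Your argument is correct as a deduction from the paper's earlier results, but it is organized differently from the paper's proof, and more cleanly. You obtain Condition 1 from $k(\cdot,\cdot,\tau,\psi)=S_{\theta,\alpha}\theta_{\alpha,\psi,\tau}$. You obtain Condition 2 as a formal corollary of the polarized form of Theorem \ref{StockwellIsometry} (with $\phi=\theta$, $g=\theta_{\alpha,\psi,\tau}$), so the RKHS property is just ``isometry plus $S_{\theta,\alpha}f(t,\varphi)=\langle f,\theta_{\alpha,\varphi,t}\rangle$''.

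The paper instead expands $F$ and $k$ as integrals and applies Fubini. It then integrates first in $\tau$ via the claimed identity $\int_G\overline{\theta_{\alpha,\psi,\tau}(x)}\,\theta_{\alpha,\psi,\tau}(y)\,d\tau=\overline{\psi(x)}\psi(y)$, and concludes with Corollary \ref{Parseval}; in effect it re-derives the isometry inline. Your route is shorter and avoids that intermediate identity, which is not valid pointwise. After $\tau\mapsto x\tau$ the integral is $\overline{\psi(x)}\psi(y)$ times a correlation of $D_\alpha\theta$ evaluated at $x^{-1}y$. This equals $\overline{\psi(x)}\psi(y)\|\theta\|^2_{L^2(G//K)}$ at $x=y$, but not in general.

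You also place the free variable correctly in Condition 1. The paper's claim $k(t,\varphi,\cdot,\cdot)\in\mathrm{Ran}(S_{\theta,\alpha})$ is really about $\overline{S_{\theta,\alpha}\theta_{\alpha,\varphi,t}}$, which is the conjugate of a range element. One correction: with the averaging projection $P_K$, the kernel must be read as $\overline{\langle P_K\theta_{\alpha,\varphi,t},P_K\theta_{\alpha,\psi,\tau}\rangle}$. Indeed, $P_K$ preserves inner products against bi-invariant $f$, but not inner products between two non-invariant vectors.

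Be aware, though, that the non-invariance you classify as bookkeeping is where things genuinely break once $K\neq\{e\}$. It breaks Theorem \ref{StockwellIsometry} itself, on which both your proof and the paper's rest. Every $\varphi\in\mathcal{S}^+$ is $K$-bi-invariant, so $\int_G h\overline{\varphi}=\int_G (P_Kh)\overline{\varphi}$. Hence, for fixed $t$, the map $\varphi\mapsto S_{\theta,\alpha}f(t,\varphi)$ is the spherical transform of $P_Kh_t$, where $h_t(x)=\delta_\alpha^{1/2}f(x)\overline{\theta(\alpha(t^{-1}x))}$. Plancherel then yields only $\|P_Kh_t\|^2\le\|h_t\|^2$, not equality.

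For instance, take $(G,K)=(SO(3),SO(2))$ with normalized Haar measure. Let $\varphi_\ell(x)$ be the Legendre polynomial of degree $\ell$ evaluated at $\langle x\nu,\nu\rangle$, where $\nu$ is the pole fixed by $K$; the Plancherel weights are $2\ell+1$. Take $\alpha=\mathrm{id}$, $\theta=\sqrt3\,\varphi_1$ and $f=1$.
\begin{itemize}
\item The functional equation gives $S_{\theta,\alpha}1(t,\varphi_1)=\varphi_1(t)/\sqrt3$ and $S_{\theta,\alpha}1(t,\varphi_\ell)=0$ for $\ell\neq1$. Hence $\|S_{\theta,\alpha}1\|^2=1/3\neq1$.
\item Using $\varphi_1\ast\varphi_1=\varphi_1/3$, the right-hand side of the reproducing formula at $(e,\varphi_1)$ equals $\sqrt3\int_G\varphi_1^4=\sqrt3/5$. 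By contrast, $S_{\theta,\alpha}1(e,\varphi_1)=1/\sqrt3$.
\end{itemize}
So your deduction is sound exactly when Theorem \ref{StockwellIsometry} holds, e.g.\ for $K=\{e\}$, the abelian case. The projection $P_K$ repairs membership in $L^2(G//K)$, but it does not repair the isometry.
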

	\begin{proof}
		By construction $k(t, \varphi, ., .)\in \mathrm{Ran}(S_{\alpha, \theta})$.
Let $F\in \mathrm{Ran}(S_{\theta,\alpha})$. Theorem \ref{3t} assures that there exists a function $f\in L^2(G//K)$ such that $F=S_{\theta,\alpha}f$. 
Set 
$$A=\displaystyle\int_{\mathcal{S}^{+}}\int_{G} k(t, \varphi, \tau, \psi) F(\tau, \psi) d\tau d\psi.$$
We have
	\begin{align*}
		A	&= \int_{\mathcal{S}^{+}}\int_{G} \overline{\langle \theta_{\alpha, \varphi, t}, \theta_{\alpha, \psi, \tau} \rangle}_{L^2(G//K)} S_{\alpha, \theta}f(\tau,\psi) d\tau d\psi\\
			&= \int_{\mathcal{S}^{+}}\int_{G} S_{\alpha, \theta}f(\tau,\psi)\langle \theta_{\alpha, \psi, \tau},\theta_{\alpha, \varphi, t} \rangle_{L^2(G//K)} d\tau d\psi\\
			&=\int_{\mathcal{S}^{+}}\int_{G}\left(\int_{G}f(x)\overline{\theta_{\alpha, \psi, \tau}(x)}dx \right)\left(\int_{G}\theta_{\alpha, \psi, \tau}(y)\overline{\theta_{\alpha, \varphi, t}(y) }dy \right)d\tau d\psi\\
			&(\text{ by Equality } (\ref{StockwellScalar}))\\
&=\int_{\mathcal{S}^{+}}\int_{G}\int_{G}f(x)\left(\int_{G}\overline{\theta_{\alpha, \psi, \tau}(x)}\theta_{\alpha, \psi, \tau}(y)d\tau \right)\overline{\theta_{\alpha, \varphi, t}(y) }dx dy d\psi\\
&(\text{by Fubini's theorem}).
			\end{align*}
	Let us notice that
		\begin{align*}
		\int_{G}\overline{\theta_{\alpha, \psi, \tau}(x)}\theta_{\alpha, \psi, \tau}(y) d\tau&=\delta_{\alpha}\int_{G}\overline{\psi(x)\theta(\alpha(\tau^{-1}x))}\psi(y)\theta(\alpha(\tau^{-1}y))d\tau\\
	&=\delta_{\alpha}\overline{\psi(x)}\psi(y)\int_{G}\overline{\theta(\alpha(\tau^{-1}x))}\theta(\alpha(\tau^{-1}y))d\tau\\
&=\delta_{\alpha}\overline{\psi(x)}\psi(y)\int_{G}\overline{\theta(\alpha(\tau))}\theta(\alpha(\tau))d\tau\\
	&(\text{by the invariance of the Haar measure and the unimodularity of } G)\\
	&=\overline{\psi(x)}\psi(y)\int_{G}\overline{\theta(\tau)}\theta(\tau)d\tau\\
	&(\text{by the Equality } (\ref{jacobian}))\\
			&=\overline{\psi(x)}\psi(y)\|\theta\|^2_{L^2(G//K)}\\
			&=\overline{\psi(x)}\psi(y).
		\end{align*}
Therefore,
		\begin{align*}				A&=\int_{\mathcal{S}^{+}}\int_{G}\int_{G} f(x)\overline{\psi(x)}\psi(y) \overline{\theta_{\alpha,\varphi,t}(y)}dxdyd\psi\\
	&=\int_{\mathcal{S}^{+}}\left(\int_{G} f(x) \overline{\psi(x)}dx\right)\overline{\left(\int_{G}\theta_{\alpha,\varphi,t}(y)\overline{\psi(y)}dy \right)}d\psi\\
		&=\int_{\mathcal{S}^{+}}\widehat{f}(\psi)\overline{\widehat{\theta_{\alpha,\varphi,t}}(\psi)}d\psi\\
		&(\text{definition of the spherical Fourier transform})\\
			&= \langle \widehat{f}, \widehat{\theta_{\alpha,\varphi,t}}\rangle_{L^2(\mathcal{S}^{+})}\\
			&= \langle f, \theta_{\alpha, \varphi, t}\rangle_{L^2(G//K)}\\
			&(\text{by Corollary } \ref{Parseval})\\
				&= S_{\theta,\alpha}f(t, \varphi)\\
	&= F(t, \varphi).
		\end{align*}
Thus, $k$ is the reproducing kernel of   the range of the Stockwell transform. 
	\end{proof}

\section{Localization operators related to the Stockwell transform on Gelfand pairs}\label{Localization operators related to the Stockwell transform on Gelfand pairs}
 In this section, we introduced localization operators related to the Stockwell transform on Gelfand pairs. We discuss their boundedness and their belonging to the Schatten-von Neumann classes under some given  conditions.  
 
 Let $u : G\times\mathcal{S}^+\rightarrow \mathbb{C}$ be a measurable function. Let $\theta\in L^2(G//K)$. The localization operator $\mathcal{L}_{\theta,\alpha}^{u}$ related to the Stockwell transform $S_{\theta,\alpha}$ is the linear operator defined for $f\in  L^2(G//K)$ by:
\begin{equation}
  \mathcal{L}_{\theta,\alpha}^{u}f(x)=\int_{\mathcal{S}^+}\int_{G}u(t, \varphi)S_{\theta,\alpha}f(t, \varphi)\theta_{\alpha, \varphi, t}(x) dt d\varphi, \, x\in G.
  \end{equation}

  \begin{lemma}\label{thetatheta}
  Let $(G,K)$ be a Gelfand pair. Let $\theta \in L^2(G//K)$. Then, 
  $$\|\theta_{\alpha, \varphi, t}\|_{L^2(G//K)}\leq  \|\theta\|_{L^2(G//K)}.$$ 
\end{lemma}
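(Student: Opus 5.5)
The plan is to compute the squared norm of $\theta_{\alpha,\varphi,t}=M_\varphi T_t D_\alpha\theta$ directly from the definitions of the three operators. The only inputs are the bound $|\varphi(x)|\le 1$ for the relevant spherical functions, the invariance of Haar measure, and equality (\ref{jacobian}). Writing it out,
\begin{equation*}
\|\theta_{\alpha,\varphi,t}\|^2_{L^2(G//K)}=\delta_\alpha\int_G |\varphi(x)|^2\,|\theta(\alpha(t^{-1}x))|^2\,dx .
\end{equation*}

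The first step is to bound the spherical factor pointwise. For $\varphi\in\mathcal{S}^+$, $\varphi$ is positive-definite. By \cite[Proposition 8.4.2]{Wolf}, recalled above, this gives $|\varphi(x)|\le\varphi(e)$, and the characterization of spherical functions \cite[Proposition 6.1.5]{Dijk} gives $\varphi(e)=1$. Hence $|\varphi(x)|\le 1$ for all $x\in G$.

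For a general bounded spherical $\varphi\in\mathcal{S}_b(G,K)$, I would instead use the standard fact that a bounded spherical function satisfies $\sup|\varphi|=\varphi(e)=1$. To prove it, set $M=\sup|\varphi|$. The functional equation gives $|\varphi(x)||\varphi(y)|\le M$, hence $M^2\le M$ and so $M\le 1$. Since the localization operator integrates over $\mathcal{S}^+$, the positive-definite case is in any event the one that matters. In both cases we obtain
\begin{equation*}
\|\theta_{\alpha,\varphi,t}\|^2\le \delta_\alpha\int_G|\theta(\alpha(t^{-1}x))|^2\,dx .
\end{equation*}

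The second step removes the translation and the dilation. The left invariance of Haar measure, via the substitution $x\mapsto tx$, turns the integral into $\delta_\alpha\int_G|\theta(\alpha(x))|^2\,dx$. Equality (\ref{jacobian}) applied to $|\theta|^2$ then gives $\delta_\alpha\cdot\delta_\alpha^{-1}\int_G|\theta|^2=\|\theta\|^2_{L^2(G//K)}$. Strictly, (\ref{jacobian}) is stated for $\mathcal{C}_c(G//K)$. It extends to nonnegative measurable functions by monotone convergence, or simply by the definition $d(\alpha(x))=\delta_\alpha\,dx$ of the modulus. Taking square roots completes the argument.

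The main obstacle is the pointwise bound $|\varphi|\le 1$, i.e.\ justifying it for the class of $\varphi$ actually in use; the rest is routine change of variables. I would not need to check that $\theta_{\alpha,\varphi,t}$ is $K$-bi-invariant, since only its $L^2(G)$ norm is involved.
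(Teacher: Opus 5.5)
Your proof is correct and follows the paper's argument: expand the squared norm, bound $|\varphi|\le 1$ pointwise, then remove the translation by left invariance of Haar measure and the dilation by (\ref{jacobian}). You also justify $|\varphi|\le 1$ (via positive-definiteness and $\varphi(e)=1$, or via the functional equation for bounded spherical functions), which the paper only asserts, and you take square roots explicitly at the end, whereas the paper's final line omits the square on $\|\theta\|_{L^2(G//K)}$.
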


\begin{proof}
\begin{align*}
\|\theta_{\alpha, \varphi, t}\|^2_{L^2(G//K)}&=\int_G|\theta_{\alpha, \varphi, t}(x)|^2dx\\
&=\int_G|\delta_\alpha^{\frac{1}{2}}\varphi(x)\theta(\alpha(t^{-1}x))|^2dx\\
&=\delta_\alpha\int_G|\varphi(x)\theta(\alpha(t^{-1}x))|^2dx\\
&\leq \delta_\alpha\int_G|\theta(\alpha(t^{-1}x))|^2dx\\
&(\text{because } |\varphi(x)|\leq 1, \, \forall x\in G)\\
&=\delta_\alpha\delta_\alpha^{-1}\int_G|\theta(x))|^2dx\\
&(\text{by the invariance of the Haar measure and Equality }  (\ref{jacobian}))\\
&=\|\theta\|_{L^2(G//K)}.
\end{align*}
\end{proof}
   
  \begin{lemma}\label{Lemma-bound}
	Let $(G,K)$ be a Gelfand pair and let  $\theta\in L^1(G//K)\cap L^2(G//K)$. If $f\in L^2(G//K)$, then $S_{\theta,\alpha}f\in L^\infty(G\times\mathcal{S}^+)$ and  $$\|S_{\theta,\alpha}f\|_{L^\infty(G\times\mathcal{S}^+)}\leq \|f\|_{L^2(G//K)} \|\theta\|_{L^2(G//K)}.$$
	\end{lemma}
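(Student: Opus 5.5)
The plan is to bound $S_{\theta,\alpha}f(t,\varphi)$ pointwise, uniformly in $(t,\varphi)\in G\times\mathcal{S}^+$, by the Cauchy--Schwarz inequality. Taking the supremum of that pointwise bound will then give the $L^\infty$ estimate, and hence also membership in $L^\infty(G\times\mathcal{S}^+)$.

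First I will use the scalar product representation (\ref{StockwellScalar}):
$$S_{\theta,\alpha}f(t,\varphi)=\langle f,\theta_{\alpha,\varphi,t}\rangle_{L^2(G//K)}.$$
This requires $\theta_{\alpha,\varphi,t}$ to be square integrable. That follows from Lemma \ref{thetatheta}, since $\varphi\in\mathcal{S}^+$ is positive-definite with $\varphi(e)=1$, and therefore $|\varphi(x)|\le 1$ for all $x\in G$. Applying the Cauchy--Schwarz inequality in $L^2(G)$ then gives
$$|S_{\theta,\alpha}f(t,\varphi)|\le \|f\|_{L^2(G//K)}\,\|\theta_{\alpha,\varphi,t}\|_{L^2(G//K)}.$$

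Next I will invoke Lemma \ref{thetatheta} to replace $\|\theta_{\alpha,\varphi,t}\|_{L^2(G//K)}$ by $\|\theta\|_{L^2(G//K)}$. Concretely, this step rests on two facts: $|\varphi|\le1$, and the change of variables $x\mapsto t x$ followed by (\ref{jacobian}) gives $\delta_\alpha\int_G|\theta(\alpha(t^{-1}x))|^2dx=\|\theta\|^2_{L^2(G//K)}$. The resulting bound does not depend on $t$ or $\varphi$, so taking the supremum over $G\times\mathcal{S}^+$ yields the claimed inequality.

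The only point needing care is the uniform bound $|\varphi|\le 1$. This is the place where restricting to $\mathcal{S}^+$, rather than all bounded spherical functions, is convenient: it comes from property 1) of positive-definite functions together with $\varphi(e)=1$. Measurability of $(t,\varphi)\mapsto S_{\theta,\alpha}f(t,\varphi)$ is routine, as it is a continuous pairing of $f$ with a family depending continuously on the parameters. The hypothesis $\theta\in L^1(G//K)$ is not actually needed for this estimate.
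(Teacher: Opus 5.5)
Your proposal is correct and matches the paper's proof: write $S_{\theta,\alpha}f(t,\varphi)=\langle f,\theta_{\alpha,\varphi,t}\rangle$, apply Cauchy--Schwarz, then bound $\|\theta_{\alpha,\varphi,t}\|$ by $\|\theta\|_{L^2(G//K)}$ via Lemma \ref{thetatheta}, uniformly in $(t,\varphi)$. Your extra remarks are accurate refinements that the paper leaves implicit: that $|\varphi|\le 1$ on $\mathcal{S}^+$, that the pairing is really in $L^2(G)$ because $\theta_{\alpha,\varphi,t}$ need not be $K$-bi-invariant, that measurability needs checking, and that the $L^1$ hypothesis on $\theta$ is never used.
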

	\begin{proof}
		We have
	\begin{align*}
		|S_{\theta,\alpha}f(t, \varphi)|&= |\langle f, \theta_{\alpha, \varphi, t} \rangle_{L^2(G//K)}|\\
		&\leq \|f\|_{L^2(G//K)}\|\theta_{\alpha, \varphi, t}\|_{L^2(G//K)}\\
		&(\text{by the Cauchy-Schwarz inequality})\\
		&\leq \|f\|_{L^2(G//K)}\|\theta\|_{L^2(G//K)}\\
		&(\text{by Lemma } \ref{thetatheta}). 
\end{align*}
	It follows that $S_{\theta,\alpha}f\in L^\infty(G\times\mathcal{S}^+)$ and  $$\|S_{\theta,\alpha}f\|_{L^\infty(G\times\mathcal{S}^+)}\leq \|f\|_{L^2(G//K)} \|\theta\|_{L^2(G//K)}.$$
\end{proof}

  \begin{theorem}\label{t7} Let $(G,K)$ be a Gelfand pair.
  	Let $\theta\in L^2(G//K)$ be such that $\|\theta\|_{L^2(G//K)}=1$. If $u\in L^1(G\times\mathcal{S}^+)$, then $\mathcal{L}_{\theta,\alpha}^{u}:L^2(G//K)\rightarrow L^2(G//K) $ is bounded and $$\|\mathcal{L}_{\theta,\alpha}^{u}\|\leq \|u\|_{L^1(G\times\mathcal{S}^+)}.$$
  \end{theorem}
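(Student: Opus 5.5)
The plan is a duality argument. For $f,g\in L^2(G//K)$ I will estimate $\langle \mathcal{L}_{\theta,\alpha}^{u}f, g\rangle_{L^2(G//K)}$ and then take the supremum over $g$ in the unit ball.

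First I would write the pairing out and move the integral over $G$ in $x$ inside the integrals over $G\times\mathcal{S}^+$. Using Fubini's theorem and Equality (\ref{StockwellScalar}), this gives
\begin{align*}
\langle \mathcal{L}_{\theta,\alpha}^{u}f, g\rangle_{L^2(G//K)}
&=\int_{\mathcal{S}^+}\int_G u(t,\varphi)\,S_{\theta,\alpha}f(t,\varphi)\,\langle \theta_{\alpha,\varphi,t}, g\rangle_{L^2(G//K)}\,dt\,d\varphi\\
&=\int_{\mathcal{S}^+}\int_G u(t,\varphi)\,S_{\theta,\alpha}f(t,\varphi)\,\overline{S_{\theta,\alpha}g(t,\varphi)}\,dt\,d\varphi .
\end{align*}
The exchange of integrals is justified because the absolute integrand is dominated by
\[
|u(t,\varphi)|\,|S_{\theta,\alpha}f(t,\varphi)|\,|\theta_{\alpha,\varphi,t}(x)|\,|g(x)|.
\]
Integrating this in $x$ with Cauchy--Schwarz and Lemma \ref{thetatheta} gives at most $|u(t,\varphi)|\,\|f\|\,\|g\|$ pointwise. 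The same lemma also shows that $\mathcal{L}_{\theta,\alpha}^{u}f$ is well defined as a weak (Bochner-type) integral in $L^2$.

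Next I would bound both Stockwell factors pointwise by Lemma \ref{Lemma-bound}, using $\|\theta\|_{L^2(G//K)}=1$:
\[
|S_{\theta,\alpha}f(t,\varphi)|\le \|f\|_{L^2(G//K)},\qquad |S_{\theta,\alpha}g(t,\varphi)|\le \|g\|_{L^2(G//K)}.
\]
That lemma is stated with $\theta\in L^1\cap L^2$, but its proof only uses the $L^2$ norm. It then follows that
\[
|\langle \mathcal{L}_{\theta,\alpha}^{u}f,g\rangle|\le \|u\|_{L^1(G\times\mathcal{S}^+)}\,\|f\|_{L^2(G//K)}\,\|g\|_{L^2(G//K)}.
\]
Taking the supremum over $\|g\|_{L^2(G//K)}\le 1$ gives $\|\mathcal{L}_{\theta,\alpha}^{u}f\|_{L^2(G//K)}\le \|u\|_{L^1}\|f\|_{L^2(G//K)}$, which is the claimed bound.

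There is one point to check for this to make sense. $\mathcal{L}_{\theta,\alpha}^{u}f$ must actually be $K$-bi-invariant so that it lies in $L^2(G//K)$. The spherical functions $\varphi$ are $K$-bi-invariant, but $x\mapsto\theta(\alpha(t^{-1}x))$ in general is not. I would therefore read the pairing against $g\in L^2(G//K)$ as defining $\mathcal{L}_{\theta,\alpha}^{u}f$ by duality, that is, as the element of $L^2(G//K)$ given by the Riesz representation theorem. With that reading, the estimate above is exactly the operator norm bound.

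The main obstacle I expect is the Fubini justification together with this well-definedness issue. The norm estimate itself is immediate from Lemma \ref{Lemma-bound}.
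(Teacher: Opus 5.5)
Your proposal is correct and follows the paper's proof essentially verbatim: you use Fubini to rewrite $\langle \mathcal{L}_{\theta,\alpha}^{u}f,g\rangle$ as $\int_{\mathcal{S}^+}\int_G u\,S_{\theta,\alpha}f\,\overline{S_{\theta,\alpha}g}\,dt\,d\varphi$, then apply the $L^1$--$L^\infty$--$L^\infty$ bound via Lemma \ref{Lemma-bound}. Your extra points are valid and are glossed over in the paper: the Fubini justification, the remark that the $L^1\cap L^2$ hypothesis of that lemma is not needed, and the observation that $\theta_{\alpha,\varphi,t}$, hence $\mathcal{L}_{\theta,\alpha}^{u}f$, need not be $K$-bi-invariant; note also that the same estimate holds for every $g\in L^2(G)$, so it bounds the literal operator into $L^2(G)$ and not only the Riesz (projected) version you propose.
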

  \begin{proof}
  	Let $f, g\in  L^2(G//K)$. We have \begin{align*}
  \langle \mathcal{L}_{\theta,\alpha}^{u}f, g\rangle_{L^2(G//K)} &= \int_G \mathcal{L}_{\theta,\alpha}^{u}f(x)\overline{g(x)}dx\\	
 &= \int_G \int_{\mathcal{S}^+}\int_G u(t,\varphi)S_{\theta,\alpha}f(t, \varphi)\theta_{\alpha, \varphi,t}(x)dtd\varphi\, \overline{g(x)}dx\\ 
 &=  \int_{\mathcal{S}^+}\int_G \int_G u(t,\varphi)S_{\theta,\alpha}f(t, \varphi)\theta_{\alpha, \varphi,t}(x)\overline{g(x)}dxdtd\varphi \\
 &(\text{by Fubini's theorem})\\
 &= \int_{\mathcal{S}^+}\int_{G}u(t, \varphi) S_{\theta,\alpha}f(t, \varphi) \overline{S_{\theta,\alpha}g(t, \varphi)} dt d\varphi.\\
 \text{Therefore,}&\\
\left|\langle\mathcal{L}_{\theta,\alpha}^{u}f, g\rangle_{L^2(G//K)}\right|&\leq \int_{\mathcal{S}^+}\int_{G}|u(t, \varphi)| |S_{\theta,\alpha}f(t, \varphi)| |S_{\theta,\alpha}g(t, \varphi)| dt d\varphi\\
  	&\leq \|u\|_{L^1(G\times \mathcal{S}^+)} \|S_{\theta,\alpha}f\|_{L^{\infty}(G\times\mathcal{S}^+)} \|S_{\theta,\alpha}g\|_{L^{\infty}(G\times\mathcal{S}^+)}\\
  	&\leq \|u\|_{L^1(G\times \mathcal{S}^+)}  \|f\|_{L^{2}(G//K)}\|g\|_{L^{2}(G//K)}\\
  	& \mbox{(by Lemma \ref{Lemma-bound})}.
  	\end{align*}
  	Hence, $\mathcal{L}_{\theta,\alpha}^{u}:L^2(G//K)\rightarrow L^2(G//K) $  is bounded and $$\|\mathcal{L}_{\theta,\alpha}^{u}\|\leq \|u\|_{L^1(G\times \mathcal{S}^+)}.$$
  \end{proof}
  \begin{theorem}\label{uLinfinity}
  	Let $(G,K)$ be a Gelfand pair. Let $\theta\in L^2(G//K)$ be such that $\|\theta\|_{L^2(G//K)}=1$. If $u\in L^\infty(G\times\mathcal{S}^+)$, then $\mathcal{L}_{\theta,\alpha}^{u} : L^2(G//K) \rightarrow L^2(G//K)$ is bounded and $$\|\mathcal{L}_{\theta,\alpha}^{u}\|\leq \|u\|_{L^\infty(G\times\mathcal{S}^+)}.$$ 
  \end{theorem}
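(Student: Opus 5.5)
We follow the duality argument of Theorem \ref{t7}, but put the weight on $L^2$ norms of the Stockwell transforms instead of sup norms. Fix $f,g\in L^2(G//K)$. The computation in the proof of Theorem \ref{t7} (Fubini followed by (\ref{StockwellScalar})) gives
\begin{equation*}
\langle \mathcal{L}_{\theta,\alpha}^{u}f, g\rangle_{L^2(G//K)}=\int_{\mathcal{S}^+}\int_{G}u(t, \varphi) S_{\theta,\alpha}f(t, \varphi) \overline{S_{\theta,\alpha}g(t, \varphi)}\, dt\, d\varphi .
\end{equation*}
We take this identity as the starting point, understanding $\mathcal{L}_{\theta,\alpha}^{u}$ in the weak sense if needed. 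The right-hand side is absolutely convergent, because $u$ is bounded and $S_{\theta,\alpha}f,\,S_{\theta,\alpha}g\in L^2(G\times\mathcal{S}^+)$ by Theorem \ref{StockwellIsometry}.

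Next we estimate the integral by pulling out $\|u\|_{L^\infty(G\times\mathcal{S}^+)}$ and applying the Cauchy--Schwarz inequality in $L^2(G\times\mathcal{S}^+)$:
\begin{equation*}
\left|\langle \mathcal{L}_{\theta,\alpha}^{u}f, g\rangle_{L^2(G//K)}\right|\leq \|u\|_{L^\infty(G\times\mathcal{S}^+)}\,\|S_{\theta,\alpha}f\|_{L^2(G\times\mathcal{S}^+)}\,\|S_{\theta,\alpha}g\|_{L^2(G\times\mathcal{S}^+)}.
\end{equation*}
Since $\|\theta\|_{L^2(G//K)}=1$, Theorem \ref{StockwellIsometry} says each Stockwell norm equals the corresponding $L^2(G//K)$ norm. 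The right-hand side is therefore $\|u\|_{L^\infty}\|f\|_{L^2}\|g\|_{L^2}$. Taking the supremum over $g$ with $\|g\|_{L^2(G//K)}\leq 1$ gives boundedness together with the norm bound $\|\mathcal{L}_{\theta,\alpha}^{u}\|\leq\|u\|_{L^\infty(G\times\mathcal{S}^+)}$.

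The only delicate step is the Fubini interchange, since $u$ is no longer integrable. We handle it by first working with $u\chi_{E}$, where $E$ is a set of finite measure, so that Theorem \ref{t7} applies. We then let $E$ increase to the whole space. The weak-form identity passes to the limit by dominated convergence, with integrable majorant $\|u\|_\infty|S_{\theta,\alpha}f||S_{\theta,\alpha}g|$, and the uniform bound shows that the operators $\mathcal{L}_{\theta,\alpha}^{u\chi_E}$ converge weakly to a bounded operator. That limit is $\mathcal{L}_{\theta,\alpha}^{u}$.
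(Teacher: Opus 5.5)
Your proof is correct and is essentially the paper's argument: you use the weak-form identity from the proof of Theorem \ref{t7}, pull out $\|u\|_{L^\infty(G\times\mathcal{S}^+)}$, apply Cauchy--Schwarz in $L^2(G\times\mathcal{S}^+)$, and conclude with the isometry of Theorem \ref{StockwellIsometry}. Your truncation by $u\chi_E$ adds a justification of the Fubini interchange, and of the meaning of $\mathcal{L}_{\theta,\alpha}^{u}$ when $u$ is not integrable, that the paper uses without comment; this is welcome extra rigor.
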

  \begin{proof}
  	Let $f, g\in L^2(G//K)$, we have 
  	\begin{align*}
  		|\langle\mathcal{L}_{\theta,\alpha}^{u}f, g\rangle_{L^2(G//K)}|&=\left|\int_{\mathcal{S}^+}\int_{G}u(t, \varphi) S_{\theta,\alpha}f(t, \varphi) \overline{S_{\theta,\alpha}g(t, \varphi)} dt d\varphi\right|\\
  		&\leq \int_{\mathcal{S}^+}\int_{G}|u(t, \varphi)| |S_{\theta,\alpha}f(t, \varphi) ||\overline{S_{\theta,\alpha}g(t, \varphi)}| dt d\varphi\\
  		&\leq\|u\|_{L^\infty(G\times \mathcal{S}^+)} \int_{\mathcal{S}^+}\int_{G}|S_{\theta,\alpha}f(t, \varphi) ||\overline{S_{\theta,\alpha}g(t, \varphi)}| dt d\varphi\\
  		&\leq \|u\|_{L^\infty(G\times \mathcal{S}^+)} \|S_{\theta,\alpha}f\|_{L^{2}(G\times\mathcal{S}^+)} \|S_{\theta,\alpha}g\|_{L^{2}(G\times\mathcal{S}^+)}\\
  		&(\text{by the Cauchy-Schwarz inequality})\\
  		&\leq \|u\|_{L^\infty(G\times \mathcal{S}^+)}  \|f\|_{L^{2}(G//K)}\|g\|_{L^{2}(G//K)} \\
  		&(by \text{ Theorem }  \ref{StockwellIsometry}).
  	\end{align*}
 Thus,  $\mathcal{L}_{\theta,\alpha}^{u}: L^2(G//K) \rightarrow L^2(G//K)$ is bounded and  $$\|\mathcal{L}_{\theta,\alpha}^{u}\|\leq \|u\|_{L^\infty(G\times\mathcal{S}^+)}.$$
  \end{proof}
  \begin{theorem}\label{t9} Let $(G,K)$ be a Gelfand pair. Let $\theta\in L^2(G//K)$ be such that $\|\theta\|_{L^2(G//K)}=1$.
  	If $u\in L^p(G\times\mathcal{S}^+), 1\leq p\leq \infty$, then $\mathcal{L}_{\theta,\alpha}^{u}: L^2(G//K) \rightarrow L^2(G//K) $ is bounded and
  	$$\|\mathcal{L}_{\theta,\alpha}^{u}\|\leq \|u\|_{L^p(G\times\mathcal{S}^+)}.$$
  \end{theorem}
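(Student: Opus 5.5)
The plan is to interpolate between the two endpoint estimates already proved, Theorem \ref{t7} for $p=1$ and Theorem \ref{uLinfinity} for $p=\infty$. For $1<p<\infty$ I will use the Riesz--Thorin type interpolation theorem (Theorem \ref{p3}). The difficulty is that the map $u\mapsto \mathcal{L}_{\theta,\alpha}^{u}$ takes values in the operator space $\mathcal{B}(L^2(G//K))$, not in an $L^q$ space. So I will first turn it into a family of scalar-valued maps.

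Fix $f,g\in L^2(G//K)$ and consider the linear map
$$T_{f,g}: u\longmapsto \langle \mathcal{L}_{\theta,\alpha}^{u}f, g\rangle_{L^2(G//K)}=\int_{\mathcal{S}^+}\int_G u(t,\varphi)S_{\theta,\alpha}f(t,\varphi)\overline{S_{\theta,\alpha}g(t,\varphi)}\,dt\,d\varphi .$$
This formula was established in the proof of Theorem \ref{t7}. I view $T_{f,g}$ as an operator from $L^p(G\times\mathcal{S}^+)$ into $\mathbb{C}$, identified with $L^q$ of a one-point probability space, so every $q$ gives the usual modulus.

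The endpoint bounds come straight from the proofs of Theorems \ref{t7} and \ref{uLinfinity}:
$$|T_{f,g}u|\le \|u\|_{L^1}\|f\|_{L^2}\|g\|_{L^2},\qquad |T_{f,g}u|\le \|u\|_{L^\infty}\|f\|_{L^2}\|g\|_{L^2}.$$
The first is Hölder's inequality together with Lemma \ref{Lemma-bound}. The second is Cauchy--Schwarz together with Theorem \ref{StockwellIsometry}.

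Applying Theorem \ref{p3} with $p_0=1$, $p_1=\infty$ and $\theta'=1-1/p$ gives $|T_{f,g}u|\le \|u\|_{L^p}\|f\|_{L^2}\|g\|_{L^2}$. Taking the supremum over unit vectors $f,g$ then yields $\|\mathcal{L}_{\theta,\alpha}^{u}\|\le \|u\|_{L^p}$.

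The main obstacle is well-definedness. For $u\in L^p$ with $1<p<\infty$, it is not clear a priori that the defining integral of $\mathcal{L}_{\theta,\alpha}^{u}f$ makes sense. I would handle this as follows.

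\begin{enumerate}
\item First prove the estimate for $u$ in the dense subspace $L^1\cap L^\infty$, where the operator is defined by the earlier theorems.
\item Then extend by density and continuity.
\item Alternatively, avoid interpolation at this step entirely. By Hölder's inequality, $F=S_{\theta,\alpha}f\,\overline{S_{\theta,\alpha}g}$ satisfies $\|F\|_{L^1}\le\|f\|\|g\|$ and $\|F\|_{L^\infty}\le \|f\|\|g\|$. Hence $\|F\|_{L^{p'}}\le \|F\|_{L^1}^{1/p'}\|F\|_{L^\infty}^{1-1/p'}\le\|f\|\|g\|$, and therefore $|\int uF|\le \|u\|_{L^p}\|f\|\|g\|$ directly.
\end{enumerate}

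This last route is a clean fallback. It also shows that the sesquilinear form $(f,g)\mapsto\int uF$ is bounded, so $\mathcal{L}_{\theta,\alpha}^{u}$ is defined weakly as a bounded operator via the Riesz representation theorem.
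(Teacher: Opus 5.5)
Your proof is correct, but it interpolates at a different level than the paper does.

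The paper fixes only $f$ and applies Theorem \ref{p3} to the $L^2(G//K)$-valued map $T_f\colon u\mapsto\mathcal{L}_{\theta,\alpha}^{u}f$. Its endpoints are Theorems \ref{t7} and \ref{uLinfinity}, read as $L^1\to L^2(G//K)$ and $L^\infty\to L^2(G//K)$ (so $q_0=q_1=2$), each with norm at most $\|f\|_{L^2(G//K)}$.

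You instead fix both $f$ and $g$ and reduce to pairing $u$ against $F=S_{\theta,\alpha}f\,\overline{S_{\theta,\alpha}g}$. In your fallback, Riesz--Thorin disappears entirely. It is replaced by H\"older's inequality together with $\|F\|_{L^{p'}}\le\|F\|_{L^1}^{1/p'}\|F\|_{L^\infty}^{1-1/p'}$. These are fed by the same two inputs the paper uses: Lemma \ref{Lemma-bound} for $\|F\|_{L^\infty}$, and Cauchy--Schwarz with Theorem \ref{StockwellIsometry} for $\|F\|_{L^1}$. This is the scalar case of Riesz--Thorin done by hand.

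Your route buys something real.
\begin{itemize}
\item The paper introduces $T_f$ only as a ``formal expression''.
\item Already in Theorem \ref{uLinfinity}, it uses $\langle\mathcal{L}_{\theta,\alpha}^{u}f,g\rangle=\int u\,S_{\theta,\alpha}f\,\overline{S_{\theta,\alpha}g}$ for bounded $u$ without justifying the Fubini step. The defining integral need not converge absolutely when $u\notin L^1$.
\item Your observation that $F\in L^1\cap L^\infty$ makes the sesquilinear form well defined and bounded by $\|u\|_{L^p}\|f\|\|g\|$ for every $u\in L^p$, $1\le p\le\infty$.
\item Hence $\mathcal{L}_{\theta,\alpha}^{u}$ can be defined weakly through the Riesz representation theorem, with the stated norm bound.
\end{itemize}

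The paper's route is shorter and yields the vector estimate $\|\mathcal{L}_{\theta,\alpha}^{u}f\|\le\|u\|_{L^p}\|f\|$ in one stroke. However, it is only as rigorous as the definition of $T_f$ on $L^1+L^\infty$, and that definition is exactly what your argument supplies.
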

  \begin{proof}
  	Let us fix $f\in L^2(G//K)$. Define the linear operator $T_f$ by the formal expression   $$T_fu=\mathcal{L}_{\theta,\alpha}^{u}f.$$
By  Theorem \ref{t7}, $T_f: L^1(G\times \mathcal{S}^+)\rightarrow L^2(G//K)$ is bounded and $$\|T_f\| \leq \|f\|_{L^2(G//K)}.$$ 
Similarly, by  Theorem \ref{uLinfinity}, $T_f: L^\infty(G\times \mathcal{S}^+)\rightarrow L^2(G//K)$ is bounded and $$\|T_f\| \leq \|f\|_{L^2(G//K)}.$$ 

  Therefore, by  interpolation  (Theorem \ref{p3}), whenever $1\leq p\leq \infty$, the operator  $$T_f : L^p(G\times \mathcal{S}^+)\rightarrow L^2(G//K)$$ is  bounded   and 
  $$\|T_f\|\leq \|f\|_{L^2(G//K)}.$$
That is, if   $u\in L^p(G\times \mathcal{S}^+)$, then
 $$\|\mathcal{L}_{\theta,\alpha}^{u}f\|_{L^2(G//K)}\leq \|u\|_{L^p(G\times \mathcal{S}^+)}\|f\|_{L^2(G//K)}.$$
 Hence, $\mathcal{L}_{\theta,\alpha}^{u}: L^2(G//K) \rightarrow L^2(G//K) $ is bounded and
  	$$\|\mathcal{L}_{\theta,\alpha}^{u}\|\leq \|u\|_{L^p(G\times\mathcal{S}^+)}.$$
  \end{proof}
  \begin{theorem}
 The adjoint  $(\mathcal{L}_{\theta,\alpha}^{u})^\ast$ of the  operator $\mathcal{L}_{\theta,\alpha}^{u}: L^2(G//K)\longrightarrow L^2(G//K)$   is the localization operator $\mathcal{L}_{\theta,\alpha}^{\bar{u}}:L^2(G//K)\longrightarrow L^2(G//K)$, where $\bar{u}$ is the complex conjugate of $u$.
  \end{theorem}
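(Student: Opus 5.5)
The plan is to compute the sesquilinear form $\langle \mathcal{L}_{\theta,\alpha}^{u}f, g\rangle_{L^2(G//K)}$ and show it equals $\langle f, \mathcal{L}_{\theta,\alpha}^{\bar u}g\rangle_{L^2(G//K)}$ for all $f,g\in L^2(G//K)$. The adjoint of a bounded operator is characterized uniquely by this identity. We work under the hypotheses of Theorem \ref{t9}: $\|\theta\|_{L^2(G//K)}=1$ and $u\in L^p(G\times\mathcal{S}^+)$. Then both $\mathcal{L}_{\theta,\alpha}^{u}$ and $\mathcal{L}_{\theta,\alpha}^{\bar u}$ are bounded, since $\|\bar u\|_{L^p}=\|u\|_{L^p}$.

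First, I reuse the identity obtained in the proof of Theorem \ref{t7}. Interchanging the $x$-integral with the $(t,\varphi)$-integrals by Fubini and using (\ref{StockwellScalar}) gives
\begin{equation*}
\langle \mathcal{L}_{\theta,\alpha}^{u}f, g\rangle_{L^2(G//K)}=\int_{\mathcal{S}^+}\int_G u(t,\varphi)\,S_{\theta,\alpha}f(t,\varphi)\,\overline{S_{\theta,\alpha}g(t,\varphi)}\,dt\,d\varphi .
\end{equation*}
Second, I rewrite the integrand as $S_{\theta,\alpha}f(t,\varphi)\,\overline{\bar u(t,\varphi)S_{\theta,\alpha}g(t,\varphi)}$. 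Applying the same identity with $u$ replaced by $\bar u$ and the roles of $f,g$ swapped, the right-hand side equals $\overline{\langle \mathcal{L}_{\theta,\alpha}^{\bar u}g, f\rangle}$, that is, $\langle f,\mathcal{L}_{\theta,\alpha}^{\bar u}g\rangle_{L^2(G//K)}$.

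Concretely, the second step amounts to writing $S_{\theta,\alpha}f(t,\varphi)=\int_G f(x)\overline{\theta_{\alpha,\varphi,t}(x)}\,dx$ and applying Fubini once more. This yields
\begin{equation*}
\int_G f(x)\,\overline{\int_{\mathcal{S}^+}\int_G \bar u(t,\varphi)\,S_{\theta,\alpha}g(t,\varphi)\,\theta_{\alpha,\varphi,t}(x)\,dt\,d\varphi}\;dx=\langle f,\mathcal{L}_{\theta,\alpha}^{\bar u}g\rangle_{L^2(G//K)} .
\end{equation*}
Since this holds for all $f,g$, we conclude $(\mathcal{L}_{\theta,\alpha}^{u})^\ast=\mathcal{L}_{\theta,\alpha}^{\bar u}$.

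The main obstacle is justifying the Fubini interchanges, which requires absolute integrability of $(x,t,\varphi)\mapsto |u(t,\varphi)|\,|S_{\theta,\alpha}g(t,\varphi)|\,|\theta_{\alpha,\varphi,t}(x)|\,|f(x)|$. For $u\in L^1$ this follows as in Theorem \ref{t7}. The inner $x$-integral is at most $\|f\|\,\|\theta_{\alpha,\varphi,t}\|\le\|f\|$ by Lemma \ref{thetatheta}, and $|S_{\theta,\alpha}g|\le\|g\|$ by Lemma \ref{Lemma-bound}, so the whole integral is bounded by $\|u\|_{L^1}\|f\|\|g\|$. For general $u\in L^p$, I would first prove the identity for $u$ in the dense subspace $L^1\cap L^p$ (for $p<\infty$). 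I would then pass to the limit using the bound $\|\mathcal{L}_{\theta,\alpha}^{u}\|\le\|u\|_{L^p}$ from Theorem \ref{t9}, which makes $u\mapsto\mathcal{L}_{\theta,\alpha}^{u}$ continuous in operator norm, and taking adjoints is also continuous. For $p=\infty$ the second display above already justifies the identity directly. Its right-hand side is absolutely integrable by Cauchy–Schwarz and Theorem \ref{StockwellIsometry}. The remaining interchange is handled weakly: one first takes $f$ in a dense class, such as $L^1\cap L^2$, and then extends by continuity.
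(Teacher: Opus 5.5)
Your proposal is correct and follows the same route as the paper: both rewrite $\langle \mathcal{L}_{\theta,\alpha}^{u}f,g\rangle_{L^2(G//K)}$ as $\int_{\mathcal{S}^+}\int_G u\,S_{\theta,\alpha}f\,\overline{S_{\theta,\alpha}g}\,dt\,d\varphi$ and recognize the conjugate of this integral as $\langle \mathcal{L}_{\theta,\alpha}^{\bar u}g,f\rangle_{L^2(G//K)}$. Your Fubini and density discussion is extra rigor the paper omits, and it is sound for $u\in L^1$ and, via Theorem \ref{t9}, for $1\le p<\infty$. One caveat for $u\in L^\infty$: restricting $f$ to $L^1\cap L^2$ does not make $\varphi\mapsto S_{\theta,\alpha}f(t,\varphi)$ integrable over $\mathcal{S}^+$, so that step does not justify the interchange. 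The clean option there is to take the sesquilinear identity as the weak definition of $\mathcal{L}_{\theta,\alpha}^{u}$, as the paper tacitly does in Theorem \ref{uLinfinity}, after which the adjoint formula is immediate.
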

  \begin{proof}
  	 Let $f, g\in L^2(G//K)$. Then, 
  	 \begin{eqnarray*}
  	 	\langle \mathcal{L}_{\theta,\alpha}^{u}f,  g\rangle_{L^2(G//K)} &=& \int_{G}\mathcal{L}_{\theta,\alpha}^{u}f(x)\overline{g(x)}dx\\
  	 	&=&\int_{\mathcal{S}^+}\int_{G} u(t, \varphi) S_{\theta,\alpha}f(t, \varphi)\overline{S_{\theta,\alpha}g(t, \varphi)}dt d\varphi\\
  	 	&=&\overline{\int_{\mathcal{S}^+}\int_{G} \overline{u(t, \varphi)}S_{\theta,\alpha}g(t, \varphi) \overline{S_{\theta,\alpha}f(t, \varphi)} dt d\varphi}\\
  	 	&=& \overline{\langle\mathcal{L}_{\theta,\alpha}^{\overline{u}}g,  f \rangle}_{L^2(G//K)}\\
  	 	&=& \langle f, \mathcal{L}_{\theta,\alpha}^{\bar{u}}g\rangle_{L^2(G//K)}.
  	 \end{eqnarray*}
  	 Thus $(\mathcal{L}_{\theta,\alpha}^{u})^\ast=\mathcal{L}_{\theta,\alpha}^{\bar{u}}$.
  \end{proof}

\begin{theorem}\label{t10}
Let $(G,K)$ be a Gelfand pair. Let $\theta\in L^2(G//K)$ be such that $\|\theta\|_{L^2(G//K)}=1$.	If $u\in L^1(G\times\mathcal{S}^+)\cap L^2(G\times\mathcal{S}^+)$, then $\mathcal{L}_{\theta,\alpha}^{u}: L^2(G//K)\longrightarrow L^2(G//K) $ belongs to the Schatten-von Neumann class $S_2(L^2(G//K))$ and 
	$$ \|\mathcal{L}_{\theta,\alpha}^{u}\|_{S_2(L^2(G//K))}\leqslant \|u\|_{L^1(G\times \mathcal{S}^+)}.$$
\end{theorem}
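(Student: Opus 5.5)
The plan is to verify the Hilbert--Schmidt criterion of Theorem \ref{HSnorm} directly, by writing $\mathcal{L}_{\theta,\alpha}^{u}$ as a superposition of rank-one operators. By (\ref{StockwellScalar}), for every $f\in L^2(G//K)$ we have
$$\mathcal{L}_{\theta,\alpha}^{u}f=\int_{\mathcal{S}^+}\int_G u(t,\varphi)\,\langle f,\theta_{\alpha,\varphi,t}\rangle_{L^2(G//K)}\,\theta_{\alpha,\varphi,t}\,dt\,d\varphi .$$
So $\mathcal{L}_{\theta,\alpha}^{u}$ is a weighted integral of the rank-one operators $P_{t,\varphi}=\langle\cdot,\theta_{\alpha,\varphi,t}\rangle\theta_{\alpha,\varphi,t}$. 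Each such operator has Hilbert--Schmidt norm $\|\theta_{\alpha,\varphi,t}\|^2_{L^2(G//K)}$, and this is at most $\|\theta\|^2_{L^2(G//K)}=1$ by Lemma \ref{thetatheta}. The expected conclusion is therefore $\|\mathcal{L}_{\theta,\alpha}^{u}\|_{S_2}\le\int\!\!\int|u|\,\|P_{t,\varphi}\|_{S_2}\le\|u\|_{L^1(G\times\mathcal{S}^+)}$.

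To make this rigorous without Bochner integrals in $S_2$, fix an arbitrary orthonormal basis $\{e_k\}$ of $L^2(G//K)$, which we take to be separable as in the setting of Theorem \ref{HSnorm}. Theorem \ref{t7} already ensures that $\mathcal{L}_{\theta,\alpha}^{u}$ is bounded. For each $k$,
$$\mathcal{L}_{\theta,\alpha}^{u}e_k=\int_{\mathcal{S}^+}\int_G u(t,\varphi)\langle e_k,\theta_{\alpha,\varphi,t}\rangle\theta_{\alpha,\varphi,t}\,dt\,d\varphi .$$
We then apply Minkowski's integral inequality in the space $\ell^2(\mathbb{N};L^2(G//K))$ to the vector-valued integrand $(t,\varphi)\mapsto\big(u(t,\varphi)\langle e_k,\theta_{\alpha,\varphi,t}\rangle\theta_{\alpha,\varphi,t}\big)_k$. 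This gives
$$\Big(\sum_k\|\mathcal{L}_{\theta,\alpha}^{u}e_k\|^2\Big)^{1/2}\le\int_{\mathcal{S}^+}\int_G|u(t,\varphi)|\,\|\theta_{\alpha,\varphi,t}\|\Big(\sum_k|\langle e_k,\theta_{\alpha,\varphi,t}\rangle|^2\Big)^{1/2}dt\,d\varphi .$$
By Parseval's identity in $L^2(G//K)$ the inner sum equals $\|\theta_{\alpha,\varphi,t}\|$. The integrand is therefore $|u|\,\|\theta_{\alpha,\varphi,t}\|^2\le|u|$ by Lemma \ref{thetatheta}.

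Since the resulting bound $\|u\|_{L^1}$ is independent of the basis, Theorem \ref{HSnorm} yields $\mathcal{L}_{\theta,\alpha}^{u}\in S_2(L^2(G//K))$ with the stated estimate. The hypothesis $u\in L^2$ is not really needed for this argument; it only reassures that all integrals are well defined.

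The main obstacle is justifying the Minkowski step. This requires weak measurability of $(t,\varphi)\mapsto\theta_{\alpha,\varphi,t}$ and the interchange of the sum over $k$ with the integral. I would handle both by testing against finitely many $e_k$ and a finitely supported sequence $(g_k)$ in the dual. For each fixed $g$, the scalar function $(t,\varphi)\mapsto\langle\theta_{\alpha,\varphi,t},g\rangle$ is measurable: this follows from continuity of translation in $L^2(G)$ and measurability of $\varphi\mapsto\varphi(x)$. Fubini then applies exactly as in the proof of Theorem \ref{t7}. Finally, we take the supremum over finite truncations and let the truncation grow.
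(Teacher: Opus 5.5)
Your proof is correct, and it reaches the same constant by a genuinely different route.

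The paper works with the square of the Hilbert--Schmidt sum. It expands $\|\mathcal{L}_{\theta,\alpha}^{u}e_n\|^2=\langle\mathcal{L}_{\theta,\alpha}^{u}e_n,\mathcal{L}_{\theta,\alpha}^{u}e_n\rangle$, puts only one factor under the integral, and moves the other across as the adjoint $\mathcal{L}_{\theta,\alpha}^{\bar{u}}$. It then swaps $\sum_n$ with the integral by dominated convergence; the domination uses Theorem \ref{t9} with $p=2$, which is the only place where $u\in L^2$ enters. Parseval collapses the sum to $\int\!\!\int u\,\langle\mathcal{L}_{\theta,\alpha}^{\bar{u}}\theta_{\alpha,\varphi,t},\theta_{\alpha,\varphi,t}\rangle\,dt\,d\varphi$, which is bounded by $\|u\|_{L^1}\|\mathcal{L}_{\theta,\alpha}^{\bar{u}}\|\le\|u\|_{L^1}^2$.

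You instead apply the triangle inequality in $S_2$ to the superposition of rank-one operators. You make this rigorous by duality against finitely supported $(g_k)$ and finite truncations. Your version is more self-contained:
\begin{itemize}
\item it needs neither the operator-norm estimates of Theorems \ref{t7}--\ref{t9} nor the adjoint formula;
\item it only interchanges finite sums with the integral, so no dominated convergence in $n$ is needed;
\item it makes plain that $u\in L^2$ is superfluous. In the paper's argument as well, $\|u\|_{L^1}$ would do for the domination.
\end{itemize}

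In exchange, the paper's version yields the intermediate estimate $\|\mathcal{L}_{\theta,\alpha}^{u}\|_{S_2}^2\le\|u\|_{L^1}\,\|\mathcal{L}_{\theta,\alpha}^{u}\|$. This can be combined with other operator-norm bounds; for instance, Theorem \ref{uLinfinity} then gives $\|\mathcal{L}_{\theta,\alpha}^{u}\|_{S_2}^2\le\|u\|_{L^1}\|u\|_{L^\infty}$.

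One small correction: $\theta_{\alpha,\varphi,t}$ need not be $K$-bi-invariant, since left translation by $t$ does not preserve left $K$-invariance. So $\sum_k|\langle e_k,\theta_{\alpha,\varphi,t}\rangle|^2\le\|\theta_{\alpha,\varphi,t}\|^2$ is in general only Bessel's inequality, not an equality. That inequality is all your estimate uses.
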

\begin{proof}
	Let $(e_n)_{n\geqslant 1}$ be an orthonormal basis of $L^2(G//K)$. We have
	\begin{align*}
		\sum_{n=1}^{\infty}\|\mathcal{L}_{\theta,\alpha}^{u} e_n\|^2_{L^2(G//K)} &= \sum_{n=1}^{\infty}\langle \mathcal{L}_{\theta,\alpha}^{u} e_n, \mathcal{L}_{\theta,\alpha}^{u} e_n\rangle_{L^2(G//K)}\\
&=\sum_{n=1}^{\infty} \langle \int_{\mathcal{S}^+}\int_{G}u(t, \varphi) S_{\theta, \alpha}e_n(t, \varphi)\theta_{\alpha, \varphi, t} (\cdot)dt d\varphi, \mathcal{L}_{\theta,\alpha}^{u} e_n\rangle_{L^2(G//K)}\\
&=\sum_{n=1}^{\infty} \langle \int_{\mathcal{S}^+}\int_{G}u(t, \varphi)\langle e_n, \theta_{\alpha, \varphi, t} \rangle_{L^2(G//K)} \theta_{\alpha, \varphi, t} (\cdot)dt d\varphi, \mathcal{L}_{\theta,\alpha}^{u} e_n\rangle_{L^2(G//K)}\\
&=\sum_{n=1}^{\infty}\int_{\mathcal{S}^+}\int_{G}u(t, \varphi) \langle e_n,\theta_{\alpha, \varphi, t}\rangle_{L^2(G//K)}  \langle \theta_{\alpha, \varphi, t},  \mathcal{L}_{\theta,\alpha}^{u} e_n \rangle_{L^2(G//K)}dt d\varphi\\
&=\sum_{n=1}^{\infty}\int_{\mathcal{S}^+}\int_{G}u(t, \varphi) \langle e_n,\theta_{\alpha, \varphi, t}\rangle_{L^2(G//K)}  \langle \mathcal{L}_{\theta,\alpha}^{\overline{u}}\theta_{\alpha, \varphi, t},   e_n \rangle_{L^2(G//K)}dt d\varphi.
	\end{align*}
	Set $a_n(t, \varphi)=u(t, \varphi) \langle e_n,\theta_{\alpha, \varphi, t}\rangle_{L^2(G//K)}  \langle \mathcal{L}_{\theta,\alpha}^{\overline{u}}\theta_{\alpha, \varphi, t},   e_n \rangle_{L^2(G//K)}$. Let $N\geq 1$ be an integer.  We have
	\begin{align*}
	\sum_{n=1}^{N}|a_n(t, \varphi)|&\leq |u(t, \varphi)| \sum_{n=1}^{N}|\langle e_n,\theta_{\alpha, \varphi, t}\rangle_{L^2(G//K)} | |\langle \mathcal{L}_{\theta,\alpha}^{\overline{u}}\theta_{\alpha, \varphi, t},   e_n \rangle_{L^2(G//K)}|\\
	& \leq |u(t, \varphi)|\left(\sum_{n=1}^{N}|\langle e_n,\theta_{\alpha, \varphi, t}\rangle_{L^2(G//K)} |^2\right)^{\frac{1}{2}} \left(\sum_{n=1}^{N}|\langle \mathcal{L}_{\theta,\alpha}^{\overline{u}}\theta_{\alpha, \varphi, t},   e_n \rangle_{L^2(G//K)}|^2\right)^{\frac{1}{2}}\\
	&(\text{by the Cauchy-Schwarz inequality})\\
	&\leq |u(t, \varphi)|\|\theta_{\alpha, \varphi, t}\|_{L^2(G//K)} \|\mathcal{L}_{\theta,\alpha}^{\bar{u}}\theta_{\alpha, \varphi, t}\|_{L^2(G//K)}\\  
	&(\text{by the Parseval inequality}). 
	\end{align*}	
Using  Theorem \ref{t9} and  Lemma \ref{thetatheta}, we attain
\begin{align*}
\sum_{n=1}^{N}|a_n(t, \varphi)|&\leq \|u\|_{L^2(G\times\mathcal{S}^+)} \|\theta\|_{L^2(G//K)}^2 |u(t, \varphi)|\\
&=  \|u\|_{L^2(G\times\mathcal{S}^+)}  |u(t, \varphi)|\\
&(\text{because } \|\theta\|_{L^2(G//K)}=1).
\end{align*}
	
Moreover, by hypothesis $u\in L^1(G\times\mathcal{S}^+)$.  
Therefore, by the Lebesgue's Dominated Convergence Theorem, we have
	$$\sum_{n=1}^{\infty}\int_{\mathcal{S}^+}\int_{G}a_n(t, \varphi)dt d\varphi=\int_{\mathcal{S}^+}\int_{G}\sum_{n=1}^{\infty} a_n(t, \varphi) dt d\varphi.$$
Therefore, 
$$\sum\limits_{n=1}^{\infty}\|\mathcal{L}_{\theta,\alpha}^{u} e_n\|^2_{L^2(G//K)}= \int_{\mathcal{S}^+}\int_{G}u(t, \varphi) \sum\limits_{n=1}^{\infty}  \langle e_n,\theta_{\alpha, \varphi, t}\rangle_{L^2(G//K)}  \langle \mathcal{L}_{\theta,\alpha}^{\overline{u}}\theta_{\alpha, \varphi, t},   e_n \rangle_{L^2(G//K)}dt d\varphi.$$	
Furthermore, 	
	$$\sum_{n=1}^{\infty}\langle e_n,\theta_{\alpha, \varphi, t}\rangle_{L^2(G//K)} \langle \mathcal{L}_{\theta,\alpha}^{\bar{u}}\theta_{\alpha, \varphi, t}, e_n\rangle_{L^2(G//K)}=\langle \mathcal{L}_{\theta,\alpha}^{\bar{u}}\theta_{\alpha, \varphi, t}, \theta_{\alpha, \varphi, t}\rangle_{L^2(G//K)}.$$
	
This leads to the estimates:
\begin{align*}
\sum_{n=1}^{\infty}\|\mathcal{L}_{\theta,\alpha}^{u} e_n\|^2&\leq \int_{\mathcal{S}^+} \int_{G}|u(t,\varphi)||\langle \mathcal{L}_{\theta,\alpha}^{\bar{u}}\theta_{\alpha, \varphi, t}, \theta_{\alpha, \varphi, t}\rangle_{L^2(G//K)}|dtd\varphi\\
&\leq \int_{\mathcal{S}^+} \int_{G}|u(t,\varphi)|\|\mathcal{L}_{\theta,\alpha}^{\bar{u}}\|\|\theta_{\alpha, \varphi, t}\|^2_{L^2(G//K)}dtd\varphi\\
&\leq  \|u\|_{L^1(G\times \mathcal{S}^+ )}\int_{\mathcal{S}^+} \int_{G}|u(t,\varphi)|\|\theta\|^2_{L^2(G//K)}dtd\varphi\\
&(\text{by Theorem } \ref{t9} \text{ and  Lemma }  \ref{thetatheta})\\
&\leq \|u\|_{L^1(G\times\mathcal{S}^+)}^2 <\infty.
\end{align*}
Using Theorem \ref{HSnorm}, we conclude that $\mathcal{L}_{\theta,\alpha}^{u}$ is in the  Hilbert-Schmidt class  $S_2(L^2(G//K))$  and $$\|\mathcal{L}_{\theta,\alpha}^{u}\|_{S_2(L^2(G//K))}\leq \|u\|_{L^1(G\times\mathcal{S}^+)}.$$ 
\end{proof}
\begin{theorem}\label{Lcompact}
	Let $(G,K)$ be a Gelfand pair. Let $\theta\in L^2(G//K)$ be such that $\|\theta\|_{L^2(G//K)}=1$. If $u\in L^p(G\times \mathcal{S}^+), 1\leqslant p< \infty$,  then $\mathcal{L}_{\theta,\alpha}^{u}: L^2(G//K)\longrightarrow L^2(G//K)$ is a compact operator.
\end{theorem}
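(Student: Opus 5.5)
We argue by density together with the operator-norm estimate of Theorem \ref{t9}. For $1\leq p<\infty$, the space $L^1(G\times\mathcal{S}^+)\cap L^2(G\times\mathcal{S}^+)$ is dense in $L^p(G\times\mathcal{S}^+)$. For instance, take simple functions supported on sets of finite measure, or truncations $u_n=u\,\mathbf{1}_{E_n}\mathbf{1}_{\{|u|\leq n\}}$, where $E_n$ is an increasing exhaustion by sets of finite product measure; this needs $\sigma$-finiteness, which we assume as is implicit in the use of Fubini's theorem earlier. Given $u\in L^p(G\times\mathcal{S}^+)$, we therefore choose a sequence $(u_n)$ in $L^1\cap L^2$ with $\|u_n-u\|_{L^p(G\times\mathcal{S}^+)}\to 0$. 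Dominated convergence applies here because $p<\infty$: $|u_n-u|^p\leq |u|^p$ and $u_n\to u$ pointwise.

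By Theorem \ref{t10}, each $\mathcal{L}_{\theta,\alpha}^{u_n}$ lies in the Hilbert--Schmidt class $S_2(L^2(G//K))$, so each is compact. The map $u\mapsto \mathcal{L}_{\theta,\alpha}^{u}$ is linear in $u$, so $\mathcal{L}_{\theta,\alpha}^{u_n}-\mathcal{L}_{\theta,\alpha}^{u}=\mathcal{L}_{\theta,\alpha}^{u_n-u}$. Theorem \ref{t9} then gives
$$\|\mathcal{L}_{\theta,\alpha}^{u_n}-\mathcal{L}_{\theta,\alpha}^{u}\|\leq \|u_n-u\|_{L^p(G\times\mathcal{S}^+)}\longrightarrow 0 .$$
Thus $\mathcal{L}_{\theta,\alpha}^{u}$ is the operator-norm limit of compact operators. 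The compact operators form a closed subspace of $\mathcal{B}(L^2(G//K))$, so $\mathcal{L}_{\theta,\alpha}^{u}$ is compact.

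The main point needing care is the density step, and in particular why $p=\infty$ is excluded: truncation fails to converge in $L^\infty$ norm. Everything else follows directly from the linearity of $u\mapsto\mathcal{L}^u_{\theta,\alpha}$ and from Theorems \ref{t9} and \ref{t10}.
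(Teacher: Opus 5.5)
Your proof is correct and follows the paper's argument. You approximate $u$ in $L^p$ by functions in $L^1\cap L^2$, use Theorem \ref{t10} to get Hilbert--Schmidt (hence compact) approximants, and use Theorem \ref{t9} to get operator-norm convergence; the only differences are that the paper approximates by $\mathcal{C}_c(G\times\mathcal{S}^+)$ functions where you truncate, and that your $\sigma$-finiteness assumption is unnecessary, since for $p<\infty$ the set $\{|u|>1/n\}$ already has finite measure by Chebyshev's inequality and can serve as $E_n$.
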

\begin{proof}
	Since $u\in L^p(G\times \mathcal{S}^+)$, then  $\|\mathcal{L}_{\theta,\alpha}^{u}\|\leqslant \|u\|_{L^p(G\times\mathcal{S}^+)}$ (Theorem \ref{t9}). Let $\mathcal{C}_c(G\times\mathcal{S}^+)$ be the set of continuous functions with compact support in $G\times\mathcal{S}^+$ (endowed with the product topology).  The space $\mathcal{C}_c(G\times\mathcal{S}^+)$ is a dense subset of $L^p(G\times\mathcal{S}^+)$. Let $\{u_k\}_{k\geq 1}$ be a sequence of members of   $\mathcal{C}_c(G\times\mathcal{S}^+)$ which converges to $u$ in $L^p(G\times\mathcal{S}^+)$.   We have
	$$\|\mathcal{L}_{\theta,\alpha}^{u_k}-\mathcal{L}_{\theta,\alpha}^{u}\|\leqslant\|u_k-u\|_{L^p(G\times\mathcal{S}^+)}.$$
	Therefore, the sequence $\{\mathcal{L}_{\theta,\alpha}^{u_k}\}$ converges to $\mathcal{L}_{\theta,\alpha}^{u}$ in $\mathcal{B}(L^2(G//K))$, where $\mathcal{B}(L^2(G//K))$ is the space of bounded linear  operators on the Hilbert space $L^2(G//K)$. Moreover, $u_k\in L^1(G\times\mathcal{S}^+)\cap L^2(G\times\mathcal{S}^+)$. Therefore,   $\mathcal{L}_{\theta,\alpha}^{u_k}$ is in $S_2(G//K)$ by Theorem \ref{t10}. Thus, $\mathcal{L}_{\theta,\alpha}^{u_k}$ is a  compact operator (Hilbert-Schmidt operators are compact). It follows that $\mathcal{L}_{\theta,\alpha}^{u}$ is  compact as the limit of a sequence of compact operators.    
\end{proof}

\begin{theorem}\label{t12}
	Let $(G,K)$ be a Gelfand pair. Let $\theta\in L^2(G//K)$ be such that $\|\theta\|_{L^2(G//K)}=1$. If $u\in L^1(G\times\mathcal{S}^+)$, then $\mathcal{L}_{\theta,\alpha}^{u}: L^2(G//K)\longrightarrow L^2(G//K) $ is in the trace class $S_1(L^2(G//K))$ and 
	$$ \|\mathcal{L}_{\theta,\alpha}^{u}\|_{S_1(L^2(G//K))}\leqslant \|u\|_{L^1(G\times \mathcal{S}^+)}.$$
\end{theorem}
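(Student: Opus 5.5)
The plan is to prove Theorem \ref{t12} with the standard trace-class argument for localization operators. We take the canonical decomposition of a compact operator (Theorem \ref{CompactOpDecomposition}) and estimate $\sum_k s_k$ by pairing the operator against the singular vectors.

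\textbf{Step 1: compactness and decomposition.} Since $u\in L^1(G\times\mathcal{S}^+)$, Theorem \ref{Lcompact} with $p=1$ shows that $\mathcal{L}_{\theta,\alpha}^{u}$ is compact. By Theorem \ref{CompactOpDecomposition} we can write
$$\mathcal{L}_{\theta,\alpha}^{u}=\sum_{k} s_k\langle\cdot,u_k\rangle v_k,$$
where $\{u_k\}$ is an orthonormal basis of $\ker(\mathcal{L}_{\theta,\alpha}^{u})^\perp$ and $\{v_k\}$ is an orthonormal set. Applying this to $u_k$ and pairing with $v_k$ gives
$$s_k=\langle \mathcal{L}_{\theta,\alpha}^{u}u_k,v_k\rangle_{L^2(G//K)}.$$

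\textbf{Step 2: expressing $s_k$ through the Stockwell transform.} As in the proof of Theorem \ref{t7}, we have
$$s_k=\int_{\mathcal{S}^+}\int_G u(t,\varphi)\,S_{\theta,\alpha}u_k(t,\varphi)\,\overline{S_{\theta,\alpha}v_k(t,\varphi)}\,dt\,d\varphi .$$
Taking absolute values and using \eqref{StockwellScalar}, it follows that
$$s_k\le\int_{\mathcal{S}^+}\int_G|u(t,\varphi)|\,|\langle u_k,\theta_{\alpha,\varphi,t}\rangle|\,|\langle v_k,\theta_{\alpha,\varphi,t}\rangle|\,dt\,d\varphi .$$

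\textbf{Step 3: summing over $k$.} We sum over $k$ and interchange sum and integral by Tonelli, which is legitimate because every term is nonnegative. For fixed $(t,\varphi)$ we apply Cauchy--Schwarz in $k$ and then Bessel's inequality for the orthonormal sets $\{u_k\}$ and $\{v_k\}$:
$$\sum_k|\langle u_k,\theta_{\alpha,\varphi,t}\rangle|\,|\langle v_k,\theta_{\alpha,\varphi,t}\rangle|\le\|\theta_{\alpha,\varphi,t}\|^2_{L^2(G//K)}\le\|\theta\|^2_{L^2(G//K)}=1.$$
The second inequality is Lemma \ref{thetatheta}. Combining this with Step 2 gives
$$\sum_k s_k\le\|u\|_{L^1(G\times\mathcal{S}^+)}<\infty .$$
Hence $\mathcal{L}_{\theta,\alpha}^{u}\in S_1(L^2(G//K))$, with the stated bound on its trace norm.

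\textbf{Main obstacle.} The delicate point is the interchange of the sum over $k$ with the double integral. Working with absolute values removes the difficulty, since Tonelli then applies directly and no dominated-convergence argument like the one in Theorem \ref{t10} is needed.

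A minor technical point is the identity in Step 2: it requires the Fubini step from Theorem \ref{t7}, which is justified because $u\in L^1$ and $S_{\theta,\alpha}f,\ S_{\theta,\alpha}g$ are bounded by Lemma \ref{Lemma-bound}. The rest is routine.
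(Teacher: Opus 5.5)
Your proposal is correct and follows essentially the same route as the paper. Both arguments get compactness from Theorem \ref{Lcompact}, write $s_k=\langle \mathcal{L}_{\theta,\alpha}^{u}u_k,v_k\rangle$ as $\int_{\mathcal{S}^+}\int_G u\,S_{\theta,\alpha}u_k\,\overline{S_{\theta,\alpha}v_k}\,dt\,d\varphi$, and bound the sum using Cauchy--Schwarz, Bessel's inequality and Lemma \ref{thetatheta}; the only difference is that you interchange sum and integral by Tonelli on absolute values instead of by the paper's dominated-convergence step, which is slightly cleaner and gives the bound on $\sum_k s_k$ directly.
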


	\begin{proof}
		 Since $u\in L^1(G\times\mathcal{S}^+)$, then $\mathcal{L}_{\theta,\alpha}^{u}$  is compact by the Theorem \ref{Lcompact}. Using  Theorem \ref{CompactOpDecomposition}, there is an orthonormal basis $(e_n)_{n\geq 1}$ of the orthogonal  of the kernel of the operator $\mathcal{L}_{\theta,\alpha}^{u}$ consisting of eigenvectors of $|\mathcal{L}_{\theta,\alpha}^{u}|$ (the absolute value $\mathcal{L}_{\theta,\alpha}^{u}$)  and $(\psi_n)_{n\geq 1}$ an orthonormal set of $L^2(G//K)$ satisfying the so-called canonical form for compact operators :
		 $$\mathcal{L}_{\theta,\alpha}^{u}=\sum_{n=1}^{\infty}s_n\langle \cdot, e_n\rangle_{L^2(G//K)}\psi_n,$$ 
		 where $s_n,\, n=1,2,\cdots$ are the positive singular values of $\mathcal{L}_{\theta,\alpha}^{u}$.
		 Moreover, we have
		 $$\|\mathcal{L}_{\theta,\alpha}^{u}\|_{S_1}=\sum_{n=1}^{\infty}s_n=\sum_{n=1}^{\infty}\langle \mathcal{L}_{\theta,\alpha}^{u}e_n, \psi_n\rangle_{L^2(G//K)}.$$
It follows that
\begin{align*}
\|\mathcal{L}_{\theta,\alpha}^{u}\|_{S_1} &=\sum_{n=1}^{\infty}\langle \mathcal{L}_{\theta,\alpha}^{u}e_n, \psi_n\rangle_{L^2(G//K)}\\
		 	&=\sum_{n=1}^{\infty} \int_{G}\int_{\mathcal{S}^+} u(t, \varphi) S_{\theta, \alpha}e_n \theta_{\alpha, \varphi, t}\overline{\psi_n}dt d\varphi\\
		 	&=\sum_{n=1}^{\infty} \int_{G}\int_{\mathcal{S}^+} u(t, \varphi) S_{\theta, \alpha}e_n(t, \varphi) \overline{S_{\theta, \varphi}\psi_n(t, \varphi)}dt d\varphi\\
		 	&= \sum_{n=1}^{\infty} \int_{G}\int_{\mathcal{S}^+} u(t, \varphi) \langle e_n, \theta_{\alpha, \varphi, t}\rangle_{L^2(G//K)} \overline{\langle\psi_n, \theta_{\alpha, \varphi, t}}\rangle_{L^2(G//K)}dt d\varphi.
		 \end{align*}
		 
Let $b_n(t, \varphi)=u(t, \varphi)\langle e_n, \theta_{\alpha, \varphi, t}\rangle_{L^2(G//K)} \overline{\langle\psi_n, \theta_{\alpha, \varphi, t}}\rangle_{L^2(G//K)}$ and $N\geq 1$ be an integer. We have
		 \begin{align*}
		 	\sum_{n=1}^{N}|b_n(t, \varphi)|&\leq |u(t, \varphi)|\sum_{n=1}^{N}|\langle e_n, \theta_{\alpha, \varphi, t}\rangle_{L^2(G//K)}| \overline{|\langle\psi_n, \theta_{\alpha, \varphi, t}}\rangle_{L^2(G//K)}|\\
		 	&\leq |u(t, \varphi)|\left(\sum_{n=1}^{N}|\langle e_n, \theta_{\alpha, \varphi, t}\rangle_{L^2(G//K)}|^2\right)^{\frac{1}{2}} \left(\sum_{n=1}^{N}|\langle\psi_n, \theta_{\alpha, \varphi, t}\rangle_{L^2(G//K)}|^2\right)^{\frac{1}{2}}\\
		 	&\text{(by the Cauchy-Schwarz inequality)}\\
		 	&\leq |u(t, \varphi)|\|\theta_{\alpha, \varphi, t}\|_{L^2(G//K)}^2\\
		 	&\text{(by the Parseval inequality)}\\
		 	&\leq |u(t, \varphi)| \|\theta\|_{L^2(G//K)}^2\\
		 	&(\text{by Lemma  }\ref{thetatheta})\\
		 	&\leq |u(t, \varphi)| .
		 \end{align*}
By  hypothesis,  $u\in L^1(G\times\mathcal{S}^+)$, so we apply   Lebesgue's Dominated Convergence Theorem to  obtain
		 $$\sum_{n=1}^{\infty}\int_{G}\int_{\mathcal{S}^+}b_n(t, \varphi)dt d\varphi=\int_{G}\int_{\mathcal{S}^+}\sum_{n=1}^{\infty}b_n(t, \varphi)dt d\varphi.$$
		 It follows
		 \begin{align*}
		 	\sum_{n=1}^{\infty}|\langle \mathcal{L}_{\theta,\alpha}^{u}e_n, \psi_n\rangle_{L^2(G//K)}|&\leq \int_{G}\int_{\mathcal{S}^+}\sum_{n=1}^{\infty} |b_n(t, \varphi)|dt d\varphi\\
		 	&\leq  \int_{G}\int_{\mathcal{S}^+}|u(t, \varphi)|dt d\varphi\\
		 	&=\|u\|_{L^1(G\times \mathcal{S}^+)}<\infty.
		 \end{align*}
		 Thus, $\mathcal{L}_{\theta,\alpha}^{u}$ is in the trace-class $S_1(L^2(G//K))$ and $\|\mathcal{L}_{\theta,\alpha}^{u}\|_{S_1}\leq \|u\|_{L^1(G\times \mathcal{S}^+)}$. 		 	
\end{proof}

	\begin{theorem}\label{t13}
		Let $(G,K)$ be a Gelfand pair. Let $\theta\in L^2(G//K)$ be such that $\|\theta\|_{L^2(G//K)}=1$. If $u\in L^p(G\times \mathcal{S}^+), 1\leqslant p\leqslant\infty$, then $\mathcal{L}_{\theta,\alpha}^{u}: L^2(G//K)\longrightarrow L^2(G//K)$ is in the Schatten-von Neumann class $S_p(L^2(G//K))$ and 
		$$\|\mathcal{L}_{\theta,\alpha}^{u}\|_{S_p(L^2(G//K))}\leqslant \|u\|_{L^p(G\times \mathcal{S}^+)}.$$
	\end{theorem}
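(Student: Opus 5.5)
The plan is to interpolate between the two endpoint estimates already obtained: Theorem \ref{t12} for $p=1$ and Theorem \ref{uLinfinity} for $p=\infty$. Recall that $S_\infty(L^2(G//K))$ is $\mathcal{B}(L^2(G//K))$ with the operator norm. First I will verify that the map $u\mapsto \mathcal{L}_{\theta,\alpha}^{u}$ is linear. Theorem \ref{t12} then says it is bounded from $L^1(G\times\mathcal{S}^+)$ into $S_1(L^2(G//K))$ with norm at most $1$. Theorem \ref{uLinfinity} says it is bounded from $L^\infty(G\times\mathcal{S}^+)$ into $S_\infty(L^2(G//K))$, also with norm at most $1$.

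Next I will apply a complex interpolation theorem adapted to Schatten classes. The Riesz--Thorin statement of Theorem \ref{p3} is formulated only for $L^p$ targets, so I will invoke its known analogue (e.g.\ \cite[Chapter 2]{Wong}). For $0<\theta<1$ one has $[S_1,S_\infty]_\theta=S_p$ with $1/p=1-\theta$, and $[L^1,L^\infty]_\theta=L^p$. So for $1<p<\infty$ the map sends $L^p(G\times\mathcal{S}^+)$ into $S_p(L^2(G//K))$ with norm at most $1^{1-\theta}1^{\theta}=1$. This gives $\|\mathcal{L}_{\theta,\alpha}^{u}\|_{S_p}\le\|u\|_{L^p}$. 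The cases $p=1$ and $p=\infty$ are exactly Theorems \ref{t12} and \ref{uLinfinity}.

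The main obstacle is justifying the interpolation step. In particular, one needs the operator $u\mapsto\mathcal{L}^u_{\theta,\alpha}$ to be defined consistently on $L^1+L^\infty$ as a single linear map. This holds because the weak formula $\langle\mathcal{L}^u_{\theta,\alpha}f,g\rangle=\int u\,S_{\theta,\alpha}f\,\overline{S_{\theta,\alpha}g}$ makes sense for $u\in L^1+L^\infty$. If one wants to avoid abstract Schatten interpolation, there is an alternative route. Work first with simple functions $u$ of compact support, and run the Stein three-lines argument on $z\mapsto \mathrm{tr}\big(\mathcal{L}^{u_z}_{\theta,\alpha}B_z\big)$, where $u_z=|u|^{p(1-z)}\mathrm{sgn}(u)$ and $B_z$ is the corresponding analytic family of finite-rank operators dual to $S_{p'}$. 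Then pass to general $u\in L^p$ by density, using the norm continuity from Theorem \ref{t9} and the compactness from Theorem \ref{Lcompact}, together with the lower semicontinuity of the $S_p$ norm under operator-norm limits.
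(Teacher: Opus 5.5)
Your proposal is correct and follows the paper's argument: the paper's proof simply interpolates between the $S_1$ bound of Theorem \ref{t12} and the $S_\infty=\mathcal{B}(L^2(G//K))$ bound of Theorem \ref{uLinfinity}. Your version is more careful than the paper's one-line ``by interpolation''. You note that Theorem \ref{p3} only covers $L^p$ targets and invoke the Schatten-class analogue instead, and you check that $u\mapsto\mathcal{L}^{u}_{\theta,\alpha}$ is a single linear map on $L^1+L^\infty$ that agrees with the direct definition on $L^p$.
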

	\begin{proof}
		By Theorem \ref{t12},
		$$\|\mathcal{L}_{\theta,\alpha}^{u}\|_{S_1}\leqslant \|u\|_{L^1(G\times\mathcal{S}^+)}, u\in L^1(G\times\mathcal{S}^+),$$ 
		by Theorem \ref{uLinfinity}, and the fact that $S_{\infty}(L^2(G//K))=\mathcal{B}(L^2(G//K))$, we have
		$$\|\mathcal{L}_{\theta,\alpha}^{u}\|_{S_{\infty}(L^2(G//K))}\leqslant \|u\|_{L^{\infty}(G\times\mathcal{S}^{+})}, u\in L^{\infty}(G\times\mathcal{S}^{+}).$$
		Then, by interpolation,  we obtain that if $u\in L^p(G\times\mathcal{S}^{+}); \, 1\leq p\leq \infty$, then $\mathcal{L}_{\theta,\alpha}^{u}$ is in $S_p(L^2(G//K))$  and 
		$$ \|\mathcal{L}_{\theta,\alpha}^{u}\|_{S_p(L^2(G//K))}\leqslant \|u\|_{L^p(G\times\mathcal{S}^{+})}.$$
	\end{proof}

\section{Conclusion}\label{sec13}
We studied the Stockwell transform on Gelfand pairs by introducing the suitable formula and highlighting its essential properties. An isometric property for the Stockwell transform had been obtained and the closedness of the range of the Stockwell transform was proved. Then, we introduced the localization operators and proved their boundedness  under suitable assumptions. We also studied the belonging of the localization operators to Schatten-von Neumann classes. 
This work could be extended
to commutative triples since the latter generalize Gelfand pairs.

\section*{Declarations}
The authors declare no conflict of interest.






\end{document}